\newtheorem{theorem}{Theorem}[section]
\newtheorem{lemma}[theorem]{Lemma}
\newtheorem{corollary}[theorem]{Corollary}
\newtheorem{proposition}[theorem]{Proposition}
\theoremstyle{definition}
\newtheorem{definition}{Definition}
\newtheorem{example}{Example}
\newtheorem{notation}{Notation}
\newcommand{\E}{\mathbb{E}}
\theoremstyle{remark}
\newtheorem{remark}{Remark}
\newcommand{\x}{\textbf{x}}
\newcommand{\North}{\texttt{N}}
\newcommand{\East}{\texttt{E}}
\newcommand{\fd}{\Delta_{\ell} f}
\newcommand{\fdm}{\Delta_{m} f}
\newcommand{\SetL}[1]{S_{\Delta_{#1}}}
\newcommand{\mL}{\mathcal{L}}
\newcommand{\des}{\mathrm{des}}
\newcommand{\asc}{\mathrm{asc}}
\DeclareMathOperator{\PF}{PF}
\DeclareMathOperator{\PPF}{PPF}
\DeclareMathOperator{\disp}{dis} 
\DeclareMathOperator{\Des}{Des}
\DeclareMathOperator{\Asc}{Asc}
\DeclareMathOperator{\Tie}{Tie}
\newcommand{\PR}{\mathbb{P}}
\newcommand{\Luka}{\L ukasiewicz}
\newcommand{\LP}{\mathcal{L}}
\newcommand{\tb}[1]{{\bf\color{blue}{#1}}}
\newcommand{\defterm}{\textbf}
\title{
On statistics of prime parking functions,\\ \L ukasiewicz~paths, and quasisymmetric functions
}
\author{Pamela E. Harris}
\address[P.~E. Harris]{Department of Mathematical Sciences, University of Wisconsin-Milwaukee, Milwaukee, WI 53211}
\email{\textcolor{blue}{\href{mailto:peharris@uwm.edu}{peharris@uwm.edu}}}
\author[Kara]{Selvi Kara} 
\address[S.~Kara]{Department of Mathematics, Bryn Mawr College, Bryn Mawr, PA 19010}
\email{\textcolor{blue}{\href{mailto:skara@brynmawr.edu}{skara@brynmawr.edu}}}
\author{Erin McNicholas}
\address[E.~McNicholas]{Department of Mathematics, Willamette University,  Salem, OR 97301}
\email{\textcolor{blue}{\href{mailto:emcnicho@willamette.edu}{emcnicho@willamette.edu}}}
\author{Kathryn Nyman}
\address[K.~Nyman]{Department of Mathematics, Willamette University,  Salem, OR 97301}
\email{\textcolor{blue}{\href{mailto:knyman@willamette.edu}{knyman@willamette.edu}}}
\author{Mei Yin}
\address[M.~Yin]{Department of Mathematics, University of Denver, Denver, CO}
\email{\textcolor{blue}{\href{mailto:mei.yin@du.edu}{mei.yin@du.edu}}}
\begin{document}

\begin{abstract}
We recall that 
a parking function of length $n+1$ is said to be prime 
if removing any instance of 1 yields a parking function of length $n$. 
In this article, we study prime parking functions from multiple lenses. We derive an explicit formula for the
average value of the total displacement of prime parking functions. 
We present a formula for the displacement-enumerator of prime parking functions that involves a sum over \Luka~paths. We describe the one-to-one correspondence between parking functions and labeled \Luka~paths via Dyck paths. We introduce the concept of $\ell$-forward differences and use this as a vehicle for examining ties, ascents, and descents in prime parking functions. We establish a link between Schur functions corresponding to the partition $(i,1^{n-i})$ and fundamental quasisymmetric functions indexed by prime parking function tie sets of size $n-i.$

\end{abstract}

\subjclass[2020]
{Primary
05A15, 
Secondary
60C05,
05E10.
}

\keywords{Prime parking functions, \Luka~paths, Quasisymmetric functions, Displacement, Asymptotic expansion, $\ell$-forward differences, Ties, ascents, and descents}

\maketitle

\tableofcontents

\section{Introduction}

Classical parking functions were first introduced by Konheim and Weiss \cite{bib:KW} in the study of the linear probes
of random hashing functions. The original conceptual setting was a linear parking lot with $n$ cars, labeled $1$ through $n$, each with a stated parking preference. In their labeled order, each car
attempts to park in its preferred spot. If the car finds its preferred spot occupied, it
moves to the next available spot. Formally, a \defterm{(classical) parking function} of length $n$ is a sequence $\pi=(\pi_1,\pi_2,\dots, \pi_n)$ 
of positive integers such that if $\lambda_1\leq\lambda_2\leq \cdots\leq \lambda_n$ is the increasing
rearrangement of $\pi_1,\pi_2,\dots,\pi_n$, then $\lambda_i\leq i$ for $1\leq i\leq n$. 
We write $\PF_n$
for the set of classical parking functions of length $n$. It is well-known that $\vert \PF_n \vert=(n+1)^{n-1}$. We refer to Yan \cite{yan2015parking} for a comprehensive survey.

An important subset of parking functions are prime parking functions.
A parking function $\pi$ is said to be a \defterm{prime parking function} of length $n+1$, if removing a 1 from the tuple $\pi$ results in a parking function of length $n$. For example, $(3,2,1,1)$ is a prime parking function of length 4, since removing either instance of the value 1 results in the preference list $(3,2,1)$ which is a parking function of length 3. 
 Notice that a prime parking function of length $n+1$ never has an entry $n+1$ because then removing a 1 would not yield a parking function of length $n$.
We denote the set of prime
parking functions of length $n+1$ by $\PPF_{n+1}$. It is well known that $\vert \PPF_{n+1} \vert=n^{n}$; for a proof we point the reader to the work of Kalikow \cite[pp.~141-142]{Stanley}.

In this article, we study (prime) parking functions from multiple lenses. We start with the average displacement of prime parking functions (\Cref{def:displacement}). The displacement of a parking function records the total number of additional spaces cars must travel beyond their preferred spots in order to park. Before deriving an explicit formula for the average displacement of prime parking functions, we first compute the expected value of a single coordinate, $\pi_1$. Because the set of (prime) parking functions is closed under permutations of the cars, all coordinates have the same marginal distribution. In particular, for $\pi=(\pi_1,\pi_2,\ldots, \pi_n)$ chosen uniformly at random from $\PPF_{n+1}$, we have
$$\E[\pi_i | \pi \in \PPF_{n+1}]= \E [\pi_1 | \pi \in \PPF_{n+1}]$$
for all $i$. Thus, it suffices to compute $\E[\pi_1 | \pi \in \PPF_{n+1}]$. We prove the following exact and asymptotic results.

\begin{restatable}[]{theorem}{expectedvalueresult}
\label{thm:expected value of pi1}
    The expected value of $\pi_1$ for prime parking functions $\pi\in \PPF_{n+1}$ is 
    \[ \E [\pi_1 | \pi \in \PPF_{n+1}]=\frac{1}{2}\left(n+3-\frac{n!}{n^n}\sum_{s=0}^n\frac{n^{s}}{s!}\right).\]
\end{restatable}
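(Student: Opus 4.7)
The plan is to compute $\sum_{\pi\in\PPF_{n+1}}\pi_1$ by a combinatorial decomposition of $\PPF_{n+1}$ and then extract the closed form via the tree function. By permutation-invariance of $\PPF_{n+1}$, every coordinate of a uniform $\pi\in\PPF_{n+1}$ has the same marginal distribution, so $(n+1)\,\E[\pi_1]=\E[\sum_{i}\pi_i]$; since every length-$(n+1)$ parking function fills spots $1,\ldots,n+1$ exactly once, $\sum_i\pi_i=\binom{n+2}{2}-\disp(\pi)$, and hence the theorem is equivalent to evaluating $\sum_{\pi}\pi_1$ directly.

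Set $a_k:=|\{\pi\in\PPF_{n+1}:\pi_1=k\}|$. The sorted characterization of primality ($\lambda_j\le j-1$ for $j\ge 2$) implies that removing any entry of $\pi\in\PPF_{n+1}$ gives an element of $\PF_n$; conversely, prepending $k$ to $\sigma\in\PF_n$ yields a prime parking function iff $\sigma_{(j)}\le j-1$ for $2\le j\le k$, where $\sigma_{(1)}\le\cdots\le\sigma_{(n)}$ is the sorted sequence. Introducing the \emph{first equality point} $r(\sigma):=\min\{j:|\{i:\sigma_i\le j\}|=j\}$, one has $a_k=\sum_{j\ge k}R_j$ with $R_j:=|\{\sigma\in\PF_n:r(\sigma)=j\}|$. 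The condition $r(\sigma)=j$ decouples $\sigma$ into two independent pieces: the $j$ cars with preferences in $[j]$ form a prime parking function of length $j$ on $[j]$ (contributing $(j-1)^{j-1}$), while the remaining $n-j$ cars (preferences in $\{j+1,\ldots,n\}$, translated down by $j$) form an ordinary parking function of length $n-j$ (contributing $(n-j+1)^{n-j-1}$). With $\binom{n}{j}$ choices of which positions are ``small,'' this gives
\[
R_j \;=\; \binom{n}{j}(j-1)^{j-1}(n-j+1)^{n-j-1},\qquad \sum_{\pi}\pi_1 \;=\; \sum_{j=1}^n \binom{j+1}{2}\,R_j.
\]

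To evaluate, I pass to exponential generating functions using the tree function $T(x)=xe^{T(x)}$, for which $e^{T(x)}=\sum_{n\ge 0}(n+1)^{n-1}x^n/n!$ (the EGF for parking functions) and $(1-T)^{-1}=1+\sum_{n\ge 1}n^n x^n/n!$. A short calculation produces
\[
\sum_{n\ge 0}\Bigl(\sum_{\pi\in\PPF_{n+1}}\pi_1\Bigr)\frac{x^n}{n!} \;=\; \frac{T^2}{2(1-T)^3}+\frac{T}{1-T},
\]
the second summand contributing $n^n$. For the first, Lagrange inversion with $\phi(T)=e^T$ reduces $[x^n/n!]\,T^2/(1-T)^3$ to the moment sums $u_j:=\sum_{m=0}^n(n-m)^j n^m/m!$. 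The recursion $u_{j+1}=n\,u_j-n\sum_{m=0}^{n-1}(n-1-m)^j n^m/m!$, obtained by shifting $m\mapsto m+1$, yields inductively
\[
u_0=e_n,\quad u_1=\tfrac{n^{n+1}}{n!},\quad u_2=n\,e_n-\tfrac{n^{n+1}}{n!},\quad u_3=-n\,e_n+\tfrac{(2n+1)n^{n+1}}{n!},
\]
where $e_n:=\sum_{s=0}^n n^s/s!$. Substituting $[x^n/n!]\,T^2/(1-T)^3=\tfrac{(n-1)!}{2}(u_3-u_2)=(n+1)n^n-n!\,e_n$ and adding $n^n$ gives
\[
\sum_{\pi}\pi_1 \;=\; \frac{(n+1)n^n-n!\,e_n}{2}+n^n \;=\; \frac{n^n(n+3)}{2}-\frac{n!}{2}\sum_{s=0}^n\frac{n^s}{s!},
\]
and dividing by $|\PPF_{n+1}|=n^n$ yields the theorem. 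The main obstacle is the moment identity for the $u_j$'s: tracking the cancellations that turn the Lagrange-inversion sum against $(1-T)^{-3}$ into the partial exponential $\sum_{s=0}^n n^s/s!$ is what forces the clean closed form, while the bijective combinatorics and tree-function manipulations are routine.
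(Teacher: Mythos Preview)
Your proof is correct and takes a genuinely different route from the paper.

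The paper conditions on both the first coordinate $j$ and the number of ones $k$ in the associated length-$n$ parking function, invoking the Diaconis--Hicks shuffle decomposition to obtain an explicit triple sum for $f_n(j,k)$; the sum is then collapsed via two instances of Abel's binomial identities $A_n(1,-1;-1,0)$ and $A_n(1,-1;-1,1)$. You instead use the ``first equality point'' (breakpoint) decomposition: write $\sigma\in\PF_n$ as a prime block on $[j]$ followed by a translated ordinary parking function on $[n-j]$, which gives the single sum $\sum_j\binom{j+1}{2}R_j$ with $R_j=\binom{n}{j}(j-1)^{j-1}(n-j+1)^{n-j-1}$, and then read off the coefficient from the tree-function identity $\sum_n B_n x^n/n!=T^2/(2(1-T)^3)+T/(1-T)$ via Lagrange--B\"urmann. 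Your combinatorics is cleaner---the prime$\times$ordinary factorization is a one-line classical decomposition, avoiding the shuffle machinery and the $k$-variable entirely---and the analytic step trades the somewhat opaque Abel identities for a routine coefficient extraction against $e^{nT}$. The paper's route, on the other hand, stays closer to existing probabilistic-parking-function literature (Diaconis--Hicks, Abel sums), which may be more portable if one wants higher moments or joint distributions with the number of ones.
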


\begin{restatable}{corollary}{expectedvalueresultasymp}
\label{cr:expected value of pi1}
 The expected value of $\pi_1$ for prime parking functions $\pi\in \PPF_{n+1}$ is 
    \[ \E [\pi_1 | \pi \in \PPF_{n+1}]=\frac12 \left (n-\sqrt{\frac{\pi n}{2}} + \frac73 + o(1) \right ).\]
\end{restatable}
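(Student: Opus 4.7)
The strategy is to extract an asymptotic expansion directly from the closed form proved in \Cref{thm:expected value of pi1}. Setting
\[A(n) := \frac{n!}{n^n}\sum_{s=0}^n \frac{n^s}{s!},\]
we have $\E[\pi_1\mid\pi\in\PPF_{n+1}] = \tfrac{1}{2}(n+3-A(n))$, so the whole task reduces to establishing
\[A(n) = \sqrt{\pi n/2} + \tfrac{2}{3} + o(1),\]
after which substituting back collapses the constants as $3-\tfrac{2}{3} = \tfrac{7}{3}$ to give the claim.

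The key input is Ramanujan's classical identity
\[\sum_{s=0}^{n-1}\frac{n^s}{s!} + \theta(n)\,\frac{n^n}{n!} = \frac{e^n}{2}, \qquad \theta(n)\xrightarrow{n\to\infty} \tfrac{1}{3},\]
or equivalently the fact that for $X_n\sim\mathrm{Poisson}(n)$, an Edgeworth/local central limit expansion yields $\PR(X_n\le n) = \tfrac{1}{2}+\tfrac{1}{3}\sqrt{2/(\pi n)} + O(1/n)$. First I would add the $s=n$ term to Ramanujan's identity and multiply through by $n!/n^n$ to obtain
\[A(n) = \frac{n!\,e^n}{2\,n^n} + \bigl(1-\theta(n)\bigr).\]
Then I would apply Stirling's formula $n! = \sqrt{2\pi n}\,(n/e)^n(1+O(1/n))$ to get $\tfrac{n!\,e^n}{2n^n} = \sqrt{\pi n/2} + O(n^{-1/2})$, and combine with $1-\theta(n) = \tfrac{2}{3}+o(1)$ to conclude $A(n) = \sqrt{\pi n/2} + \tfrac{2}{3} + o(1)$, as desired.

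The main obstacle is the constant $\tfrac{1}{3}$ in $\lim_{n\to\infty}\theta(n)$: it is this number (and not Stirling's formula) that produces the additive term $\tfrac{7}{3}$, and it cannot be extracted from crude tail bounds on the exponential series. The cleanest route is to cite Ramanujan's 1911 problem or one of its modern refinements (for instance, Flajolet, Grabner, Kirschenhofer, and Prodinger give the full asymptotic expansion of $\theta(n)$, starting $\theta(n) = \tfrac{1}{3} + \tfrac{4}{135 n} + \cdots$). A self-contained alternative is to apply Laplace's method, or an explicit Edgeworth correction, to the Poisson tail probability $\PR(X_n\le n)$, extracting the leading value $\tfrac{1}{2}$ along with the $\tfrac{1}{3}\sqrt{2/(\pi n)}$ correction; either route feeds directly into the calculation above.
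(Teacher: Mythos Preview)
Your proposal is correct and is essentially the same as the paper's proof: both reduce the exact formula from \Cref{thm:expected value of pi1} to the asymptotic $A(n)=\sqrt{\pi n/2}+\tfrac{2}{3}+o(1)$ by combining Stirling's approximation with the classical Poisson-tail estimate $e^{-n}\sum_{s=0}^n n^s/s! = \tfrac{1}{2}+\tfrac{2}{3}(2\pi n)^{-1/2}+o(n^{-1/2})$. The only cosmetic difference is that the paper cites this last fact as a lemma from \cite{durmic2022probabilistic}, whereas you package it via Ramanujan's $\theta(n)\to\tfrac{1}{3}$ (or the equivalent Edgeworth correction), which is exactly the same input.
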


Using \Cref{thm:expected value of pi1} and \Cref{cr:expected value of pi1}, we obtain the expected value for the displacement of prime parking functions.

\begin{restatable}{corollary}{expecteddisplacementforPPFs}
\label{expecteddisplacementforPPFs}
The expected value of the displacement of prime parking functions is
\[\E [\text{dis}(\pi)|\pi\in\PPF_{n+1}]=\frac{\sqrt{2\pi}}{4}n^{3/2}-\frac{n}{6}+o(n).\]    
\end{restatable}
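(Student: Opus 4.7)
The plan is to reduce the expected total displacement to the expected value of a single coordinate, which is already established in \Cref{cr:expected value of pi1}. For any parking function $\pi = (\pi_1, \ldots, \pi_{n+1}) \in \PPF_{n+1}$, the cars occupy the spots $1, 2, \ldots, n+1$ in some order, so the total displacement satisfies
\[
\disp(\pi) \;=\; \sum_{i=1}^{n+1}(\text{spot of car } i) - \sum_{i=1}^{n+1}\pi_i \;=\; \binom{n+2}{2} - \sum_{i=1}^{n+1}\pi_i.
\]
Because $\PPF_{n+1}$ is closed under permutations of the entries, every coordinate $\pi_i$ has the same marginal distribution as $\pi_1$. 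Applying linearity of expectation gives
\[
\E[\disp(\pi)\,|\,\pi \in \PPF_{n+1}] \;=\; \binom{n+2}{2} - (n+1)\,\E[\pi_1 \,|\, \pi \in \PPF_{n+1}].
\]

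Next I would substitute the asymptotic expression from \Cref{cr:expected value of pi1}, namely $\E[\pi_1] = \tfrac{1}{2}\bigl(n - \sqrt{\pi n/2} + \tfrac{7}{3} + o(1)\bigr)$, and simplify. Writing $\binom{n+2}{2} = \tfrac{(n+1)(n+2)}{2}$, the difference collapses to
\[
\E[\disp(\pi)] \;=\; \frac{n+1}{2}\Bigl[(n+2) - n + \sqrt{\pi n/2} - \tfrac{7}{3} - o(1)\Bigr] \;=\; \frac{n+1}{2}\Bigl[\sqrt{\pi n/2} - \tfrac{1}{3} + o(1)\Bigr].
\]
Expanding the product and using $\tfrac{1}{2}\sqrt{\pi/2} = \tfrac{\sqrt{2\pi}}{4}$ yields the leading term $\tfrac{\sqrt{2\pi}}{4}n^{3/2}$ from $(n+1)\sqrt{n} = n^{3/2} + n^{1/2}$, while the linear term is $-\tfrac{n+1}{6} = -\tfrac{n}{6} + O(1)$, and all remaining error is $o(n)$.

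No serious obstacle is anticipated: the structural identity $\disp(\pi) = \binom{n+2}{2} - \sum \pi_i$ together with the symmetry of $\PPF_{n+1}$ reduces the problem to arithmetic, and the asymptotic expansion of $\E[\pi_1]$ from \Cref{cr:expected value of pi1} supplies every term needed. The only care required is bookkeeping the $\sqrt{n}$ contribution coming from $(n+1)\sqrt{n}$, which correctly absorbs into the $o(n)$ error without affecting the $n^{3/2}$ or $n$ coefficients.
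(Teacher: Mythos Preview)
Your proposal is correct and follows essentially the same approach as the paper: both reduce the expected displacement to $(n+1)(n+2)/2 - (n+1)\,\E[\pi_1]$ via permutation invariance and linearity, then substitute the asymptotic from \Cref{cr:expected value of pi1} and simplify to extract the $n^{3/2}$ and $n$ terms. The algebra and the handling of the error terms match the paper's computation line for line.
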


We also study the displacement-enumerator of prime parking functions. Just as classical parking functions may be encoded by \Luka~paths, prime parking functions may also be encoded by \Luka~paths, except that the path stays strictly above the $x$-axis other than at its start and end points $(0, 0)$ and $(n, 0)$. 
This alternative interpretation transforms the total displacement of a parking function into an area enclosed by the associated \Luka~path and the $x$-axis. For more on these bijections, see the work of Selig and Zhu \cite{SZ} and references therein. 
We relate the displacement-enumerator of prime parking functions to weights of \Luka~paths, building upon earlier work for classical parking functions in Elvey-Price \cite{Elvey}. Our main result is as follows.

\begin{restatable}{theorem}{PPFDisplacementEnumerator}
\label{thm:displacement-enumerator}
The displacement-enumerator of prime parking functions is
\[
\PPF_{n+1}(q)=(n+1)! ~~q^n \sum_{\substack{\text{all possible}\\\text{\Luka~paths with} \\ \text{height sequence } \\
(h_0, h_1, \dots, h_n)}} \frac{1}{h_1-h_0+2}  ~~\prod_{j=1}^{n} \frac{q^{h_{j}}}{(h_{j}-h_{j-1}+1)!},
\]
where $\frac{q^{h_{j}}}{(h_{j}-h_{j-1}+1)!}$ denotes the weight of a step $h_{j-1} \rightarrow h_{j}$ with $h_{j} \geq h_{j-1}-1$ in the \Luka~path.  
\end{restatable}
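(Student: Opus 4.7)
The plan is to adapt the Elvey--Price argument for classical parking functions (referenced in the excerpt) to the prime setting, via a multiset-level reduction from $\PPF_{n+1}$ to $\PF_n$. A prime parking function of length $n+1$ has entries in $\{1,\dots,n\}$, and its multiplicity sequence $(c_1,\dots,c_n)$, where $c_j$ is the number of cars with preference $j$, satisfies $\sum_j c_j = n+1$ together with the partial-sum inequalities $c_1+\cdots+c_j \geq j+1$ for each $j=1,\dots,n$, which encode primeness. These inequalities say precisely that $(c_1-1, c_2, \dots, c_n)$ is the multiplicity sequence of a classical parking function of length $n$, so decrementing $c_1$ by one gives a bijection between prime multisets of length $n+1$ and classical $\PF_n$ multisets.

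I would begin by grouping the enumerator by multiset:
\[
\PPF_{n+1}(q) = \sum_{(c_1,\dots,c_n)}\binom{n+1}{c_1,\dots,c_n}\,q^{\disp(\pi)},
\]
summed over prime multisets, where $\pi$ denotes any prime PF realizing the given multiset (all such $\pi$ share the same displacement). To each multiset I assign the \Luka~path of length $n$ with heights $h_j := -j + \sum_{i \leq j}(c_i - \delta_{i,1})$, i.e., the path of the reduced multiset, so that $h_0 = h_n = 0$, $h_j \geq 0$, $c_1 = h_1 + 2$, and $c_j = h_j - h_{j-1} + 1$ for $j \geq 2$. A direct substitution yields
\[
\binom{n+1}{c_1, c_2, \dots, c_n} = \frac{(n+1)!}{h_1-h_0+2}\cdot\prod_{j=1}^n \frac{1}{(h_j - h_{j-1}+1)!}.
\]
It remains to verify the area-to-displacement identity $\disp(\pi) = n + \sum_{j=1}^n h_j$, reflecting that restoring the deleted preference-$1$ car raises every interior height of the reduced path by one, contributing exactly $n$ extra units of area.

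Combining these ingredients and writing $q^{\disp(\pi)} = q^n \prod_j q^{h_j}$ produces the stated formula after summing over all \Luka~paths of length $n$. The main point of care is establishing the equivalence between the primeness inequalities $c_1+\cdots+c_j \geq j+1$ and the \Luka~condition $h_j \geq 0$ on the reduced path; once this bijective correspondence is pinned down, everything else is a routine multinomial and area computation in the spirit of the classical Elvey--Price argument.
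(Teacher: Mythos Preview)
Your proposal is correct and follows essentially the same strategy as the paper: both arguments pass from $\PPF_{n+1}$ to $\PF_n$ via the remove-a-$1$ map, encode the result by a \L ukasiewicz path of length $n$, and then identify the multinomial count and the displacement with the step-weights and area of that path. The only organizational difference is that you group $\PPF_{n+1}$ by its multiplicity sequence up front and biject directly with \L ukasiewicz paths, whereas the paper first writes the enumerator as a weighted sum over individual $\pi\in\PF_n$ (with the factor $\frac{n+1}{h_1-h_0+2}$ accounting for the insert-a-$1$ overcount) and only afterwards collects terms by height sequence; the resulting computations of the multinomial factor and of $\disp(\pi')=n+\sum_j h_j$ are identical in content.
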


We then apply the circular rotation construction for prime parking functions due to Kalikow \cite[pp.~141-142]{Stanley} to study the generating function for ties, ascents, and descents in prime parking functions. We introduce $\ell$-forward differences $\Delta_\ell f(\pi)$ in prime parking functions. This concept generalizes ties, which are $0$-forward differences, and is also related to ascents and descents. Our main result is the following.

\begin{restatable}{proposition}{PPFRotationCount}
\label{prop:q_l-des}
Take any integer $\ell \in [0, n-2]$. Then we have
\begin{equation}
\sum_{\pi \in \PPF_n} q^{
\fd(\pi)}=(q+n-2)^{n-1}.
\end{equation}
\end{restatable}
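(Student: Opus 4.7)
The plan is to realize the sum on the left as a factored generating function via a bijection $\Phi : \PPF_n \to (\mathbb{Z}/(n-1)\mathbb{Z})^{n-1}$ obtained by taking consecutive differences modulo $n-1$. This bijection is the incarnation of Kalikow's circular-rotation principle in this setting, and crucially it pushes $\fd$ forward to a coordinate-by-coordinate counting statistic.

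First I would set up the map: every $\pi \in \PPF_n$ has entries in $\{1, \ldots, n-1\}$, so
\[
\Phi(\pi) := (d_1, d_2, \ldots, d_{n-1}), \qquad d_i \equiv \pi_{i+1} - \pi_i \pmod{n-1},
\]
is a well-defined element of $(\mathbb{Z}/(n-1)\mathbb{Z})^{n-1}$. Reading the $\ell$-forward difference as the number of consecutive positions whose forward difference lies in the residue class $\ell$ modulo $n-1$ (so that $\ell = 0$ is precisely the tie statistic emphasized in the paper), one has $\fd(\pi) = |\{i \in [n-1] : d_i = \ell\}|$.

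The hard part will be showing that $\Phi$ is a bijection; since both sides have cardinality $(n-1)^{n-1}$, injectivity suffices. Fix a residue tuple $(d_1, \ldots, d_{n-1})$; its preimages in $[n-1]^n$ form a single orbit of size $n-1$ under the value shift $\pi \mapsto \pi + c \pmod{n-1}$. Writing $c_j(\pi) := |\{i : \pi_i = j\}|$ and $a_j(\pi) := c_j(\pi) - 1$, one has $\sum_{j=1}^{n-1} a_j(\pi) = n - (n-1) = 1$, and the definition of a prime parking function unpacks to
\[
\pi \in \PPF_n \iff a_1 + a_2 + \cdots + a_j \geq 1 \text{ for every } j \in [n-1].
\]
The value shift $\pi \mapsto \pi + c$ corresponds exactly to cyclically rotating $(a_1, \ldots, a_{n-1})$, and Raney's cycle lemma, applied to an integer sequence with total sum $1$, asserts that exactly one of the $n-1$ cyclic rotations has all partial sums $\geq 1$. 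Each fiber of $\Phi$ therefore meets $\PPF_n$ in a unique element; this is the Kalikow rotation input the paper is invoking.

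The conclusion is then immediate: pushing forward under $\Phi$ and factoring coordinate by coordinate,
\[
\sum_{\pi \in \PPF_n} q^{\fd(\pi)} = \sum_{(d_1, \ldots, d_{n-1}) \in (\mathbb{Z}/(n-1))^{n-1}} \prod_{i=1}^{n-1} q^{\mathbf{1}[d_i = \ell]} = \prod_{i=1}^{n-1} \bigl( q + (n-2) \bigr) = (q+n-2)^{n-1},
\]
since for each coordinate exactly one of the $n-1$ residues (namely $d_i = \ell$) contributes $q$ while each of the remaining $n-2$ residues contributes $1$.
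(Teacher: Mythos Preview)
Your proof is correct and takes essentially the same approach as the paper: both rely on Kalikow's circular rotation to see that the consecutive-difference map $\Phi$ is a bijection $\PPF_n \to (\mathbb{Z}/(n-1)\mathbb{Z})^{n-1}$, after which the generating function factors coordinate by coordinate. Your write-up is in fact more explicit than the paper's brief probabilistic argument here---the explicit bijection $\Phi$ and its justification via the cycle lemma are exactly what the paper spells out later in the proof of \Cref{thm:Tie_Set_count}.
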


Lastly, we bring quasisymmetric functions $F_{n, S}$ and Schur functions $s_\lambda$ into the mix and further demonstrate the intriguing characteristics displayed by $\ell$-forward differences in parking functions. The following theorem shows the interconnection between parking functions and other topics in combinatorics.

\begin{restatable}{theorem}{PPFQuasiSymmetric}
\label{thm:quasisym-corr}
Let $m \neq \ell$. Then we have
$$ \sum_{\pi \in \PPF_n} q^{
\fd(\pi)} F_{n, \SetL{\ell}(\pi)}=\sum_{i=1}^n q^{n-i}(n-2)^{i-1} s_{(i,1^{n-i})},$$
and
$$ \sum_{\pi \in \PPF_n} q^{
\fd(\pi)} F_{n, \SetL{m}(\pi)}= \sum_{i=1}^n (q+n-3)^{i-1} s_{(i,1^{n-i})}.$$
\end{restatable}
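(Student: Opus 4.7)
My plan is to combine the Kalikow cyclic rotation bijection (the backbone of Proposition~\ref{prop:q_l-des}) with the following hook-shape expansion of the Schur functions appearing on the right-hand side:
\[
s_{(i,1^{n-i})} \;=\; \sum_{\substack{S \subseteq [n-1] \\ |S| = n-i}} F_{n,S}.
\]
This auxiliary identity follows from the standard formula $s_\lambda = \sum_{T \in \operatorname{SYT}(\lambda)} F_{n, \operatorname{Des}(T)}$: an SYT $T$ of shape $(i,1^{n-i})$ is determined by its leg set $L \subseteq \{2,\ldots,n\}$ with $|L| = n-i$, and a direct check shows that $\operatorname{Des}(T) = \{j - 1 : j \in L\}$, so the map $L \mapsto L - 1$ is a bijection onto the $(n-i)$-subsets of $[n-1]$. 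With this in hand, the two identities in the theorem are equivalent (after grouping the right-hand sides by $|S| = n-i$) to the pair of coefficient identities, valid for every $S \subseteq [n-1]$:
\[
\bigl|\{\pi \in \PPF_n : \SetL{\ell}(\pi) = S\}\bigr| \;=\; (n-2)^{n-1-|S|},
\qquad
\sum_{\substack{\pi \in \PPF_n \\ \SetL{m}(\pi) = S}} q^{\fd(\pi)} \;=\; (q+n-3)^{n-1-|S|}.
\]

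To establish these, I would refine the rotation argument that gives Proposition~\ref{prop:q_l-des} into a per-position statement. The Kalikow construction encodes each $\pi \in \PPF_n$ by a sequence of local ``increments'' $(c_1, \ldots, c_{n-1})$ with $c_i \in \{0, 1, \ldots, n-2\}$, where $i \in \SetL{k}(\pi)$ precisely when $c_i = k$. Independence of the $c_i$ under this bijection promotes Proposition~\ref{prop:q_l-des} to the multivariate refinement
\[
\sum_{\pi \in \PPF_n} \prod_{i=1}^{n-1} x_i^{[i \in \SetL{\ell}(\pi)]} \;=\; \prod_{i=1}^{n-1}\bigl(x_i + n - 2\bigr),
\]
from which the first coefficient identity follows at once (each $i \in S$ forces $c_i = \ell$, and each $i \notin S$ may be any of the remaining $n-2$ values). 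For the second identity, the hypothesis $m \neq \ell$ guarantees that the events $\{c_i = m\}$ and $\{c_i = \ell\}$ are disjoint at every position: conditioning on $\SetL{m}(\pi) = S$ forces $c_i = m$ for $i \in S$ (contributing the factor $1$ to the $q$-weight, since $m \neq \ell$), whereas at each $i \notin S$ the increment $c_i$ ranges over the remaining $n-2$ values, exactly one of which equals $\ell$ (contributing $q$) and the other $n-3$ of which contribute $1$. Multiplying over positions yields the factor $(q + n - 3)^{n-1-|S|}$.

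The main obstacle is verifying the position-wise independence of the $c_i$ in the Kalikow framework, together with the clean identification of ``$i \in \SetL{k}(\pi)$'' with ``$c_i = k$''; these statements depend on the precise form of the rotation bijection and on the precise definition of $\ell$-forward differences adopted earlier in the paper. Once this local structure is in place, the remainder of the argument is a straightforward bookkeeping exercise combining the displayed multivariate refinement with the hook expansion for $s_{(i,1^{n-i})}$.
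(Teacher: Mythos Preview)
Your proposal is correct and follows essentially the same route as the paper: both arguments combine the hook-shape expansion $s_{(i,1^{n-i})}=\sum_{|S|=n-i}F_{n,S}$ with the Kalikow bijection $\pi\mapsto(c_1,\dots,c_{n-1})\in(\mathbb{Z}/(n-1)\mathbb{Z})^{n-1}$ (the map $\mL$ in the paper's \Cref{thm:Tie_Set_count}), which makes the events $\{i\in\SetL{k}(\pi)\}=\{c_i=k\}$ independent across positions. Your direct computation of the conditional generating function $\sum_{\SetL{m}(\pi)=S}q^{\fd(\pi)}=(q+n-3)^{n-1-|S|}$ is a slight streamlining of the paper's version, which instead records the joint count $|\{\pi:\SetL{m}(\pi)=S,\ \SetL{\ell}(\pi)=T\}|=(n-3)^{n-1-|S|-|T|}$ (\Cref{thm:Tie_Set_count_mixed}) and then sums over $T$ via the binomial theorem.
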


This article is organized as follows. In \Cref{sec:expected value results}, we prove our results related to the expected value of $\pi_1$ for a prime parking function and obtain the average displacement in prime parking functions. In \Cref{sec: Displacement Enum}, we give the necessary background on \Luka~paths and express the displacement-enumerator of prime parking functions as a sum over \Luka~paths, with weights determined by step heights. 
In \Cref{sec:L paths}, we introduce a labeled version of \Luka~paths and describe the ties, ascents, and descents in parking functions directly from the labeled paths.
In \Cref{sec:l-forward diff}, we present $\ell$-forward differences as another vehicle for examining ties, ascents, and descents in prime parking functions.  We find the generating function for the number of $\ell$-forward differences, and use this function to derive that the expected number of ties in a prime parking function is $1$, while the expected number of descents and the expected number of ascents are both equal to $(n-2)/2$.  In \Cref{sec: TieSets QuasiSymm}, we establish a link between Schur functions corresponding to the partition $(i,1^{n-i})$ and fundamental quasisymmetric functions indexed by prime parking function tie sets of size $n-i.$ 
We conclude with some directions for future investigation.

\section{Expected value results}\label{sec:expected value results}

A principle that plays an important role in our analysis is that, as with classical parking functions, prime parking functions are invariant under permutation. 
That is, if $\pi\in\PPF_n$, and if $\lambda$ is any rearrangement
of the entries in $\pi$, then $\lambda \in \PPF_n$.
By symmetry, the uniform distribution on (prime) parking functions is invariant under permutations of the cars and all coordinates have the same expected value.

Hence, for $1\leq i\leq n+1$,
\begin{align}
    \E[\pi_i | \pi \in \PPF_{n+1}]&=\E[\pi_1 | \pi \in \PPF_{n+1}] \notag \\ 
    &=\sum_{j=1}^{n} j\cdot \PR(\pi_1=j|\pi\in \PPF_{n+1})\label{eq:second equality}\\
    &=\sum_{j=1}^n j\cdot \frac{|
    \{\pi\in\PPF_{n+1}:\pi_1=j\}|}{n^{n}}\label{eq:expectedvalue},
\end{align}
where in 
\Cref{eq:second equality} 
we use the fact that none of the entries in $\pi \in \PPF_{n+1}$ can be $n+1$.

\begin{example} 
Since  $\PPF_3=\{(1,1,1),(1,2,1),(1,1,2),(2,1,1)\}$, we have
    \[\E [\pi_1 | \pi \in \PPF_{3}]=\frac{1\cdot 3+2\cdot 1}{4}=\frac{5}{4}.\]
\end{example}

Write $[n]$ for the set of integers $\{1,2, \dots, n\}$. Subsequently, for any tuple $\alpha\in[n]^n$, we let $N_1(\alpha)$ be the number of ones in $\alpha$. 
Define \[f_{n}(j,k)=|\{\pi\in\PF_n:\pi_1=j, \ N_1(\pi)=k\}|.\]
The count $f_n(j,k)$ is for $\PF_n$, but it will play a key role in deriving the expected value $\E[\pi_1 | \pi \in \PPF_{n+1}]$ given in \Cref{eq:expectedvalue}, as we see in the following theorem.

\begin{theorem}
\label{thm:related to fnjk}
Let $n\geq 1$. 
For $j=1$, we have
\[|\{\pi\in\PPF_{n+1}:\pi_1=j\}|=|\PF_{n}|=(n+1)^{n-1}.\]
For $j=2,3,\ldots, n$, we have 
    \[|\{\pi\in\PPF_{n+1}:\pi_1=j\}|=\sum_{k=1}^{n-1}\frac{n}{k+1}f_{n}(j,k).\]
\end{theorem}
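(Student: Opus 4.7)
The plan is to treat the two cases separately, throughout relying on the sorted-sequence characterization of prime parking functions: $\pi \in \PF_{n+1}$ lies in $\PPF_{n+1}$ if and only if its weakly increasing rearrangement $(\lambda_1, \ldots, \lambda_{n+1})$ satisfies $\lambda_1 = 1$ and $\lambda_i \leq i-1$ for all $i \geq 2$. This equivalence is immediate from the definition, since deleting one copy of $1$ from $\pi$ produces the sorted sequence $(\lambda_2, \ldots, \lambda_{n+1})$, which is a parking function of length $n$ precisely when $\lambda_{i+1} \leq i$ for $i = 1, \ldots, n$. In particular, every prime parking function of length $\geq 2$ has at least two $1$'s, because $\lambda_2 \leq 1$ forces $\lambda_2 = 1$.

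For the case $j=1$, I would exhibit the bijection $\Phi \colon \{\pi \in \PPF_{n+1} : \pi_1 = 1\} \longleftrightarrow \PF_n$ given by $\Phi(\pi) = (\pi_2, \ldots, \pi_{n+1})$, with inverse $\sigma \mapsto (1, \sigma_1, \ldots, \sigma_n)$. The forward direction is immediate from the prime property applied to the $1$ in position $1$. For the reverse direction, if $\sigma \in \PF_n$ has sorted sequence $(\mu_1, \ldots, \mu_n)$ with $\mu_i \leq i$, then the sorted sequence of $(1, \sigma)$ is $(1, \mu_1, \ldots, \mu_n)$, whose entries satisfy the prime inequalities $\mu_{i-1} \leq i-1$ for $i \geq 2$. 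The count then reduces to Cayley's identity $|\PF_n| = (n+1)^{n-1}$.

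For $j \geq 2$, I would double count the set $A = \{(\pi, s) : \pi \in \PPF_{n+1},\ \pi_1 = j,\ \pi_s = 1\}$, stratified by the number of $1$'s in $\pi$. On the $\pi$-side, each $\pi$ with $\pi_1 = j$ and exactly $k+1$ ones contributes $k+1$ pairs to $A$ (its ones live entirely in positions $\{2,\ldots,n+1\}$ because $\pi_1 = j \neq 1$); writing $P_{k+1}(j)$ for the number of such $\pi$, we obtain $(k+1)P_{k+1}(j)$ pairs at level $k$. The deletion map $(\pi, s) \mapsto ((\pi_1, \ldots, \widehat{\pi_s}, \ldots, \pi_{n+1}), s)$ is a bijection from the level-$k$ part of $A$ onto $\{(\sigma, t) : \sigma \in \PF_n,\ \sigma_1 = j,\ N_1(\sigma) = k,\ 2 \leq t \leq n+1\}$, with inverse the insertion $\sigma \mapsto (\sigma_1, \ldots, \sigma_{t-1}, 1, \sigma_t, \ldots, \sigma_n)$. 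The key check --- and the main obstacle in the argument --- is that insertion always produces a prime parking function: the sorted sequence of the insertion is $(1, \mu_1, \ldots, \mu_n)$, and the parking inequalities $\mu_i \leq i$ for $\sigma$ translate directly into the prime inequalities $\mu_{i-1} \leq i-1$ for the insertion. This yields $(k+1)P_{k+1}(j) = n\,f_n(j,k)$, and summing over $k = 1, \ldots, n-1$ (the lower bound from primality forcing two $1$'s, the upper from $\sigma_1 \neq 1$ capping the one-count at $n-1$) produces the stated formula.
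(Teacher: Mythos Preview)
Your proposal is correct and follows essentially the same approach as the paper: both arguments hinge on the $n$-to-$(k+1)$ insertion/deletion correspondence between $\PPF_{n+1}$ and $\PF_n$, with the factor $\frac{n}{k+1}$ arising because each prime parking function with $k+1$ ones is hit $k+1$ times by the $n$ possible insertions. Your write-up is slightly more detailed than the paper's---you explicitly invoke the sorted-sequence characterization to verify that insertion of a $1$ always lands in $\PPF_{n+1}$ and that the $j=1$ map is a genuine bijection, whereas the paper simply asserts these facts---but the underlying idea is identical.
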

\begin{proof}
For the $j=1$ case,  since $\pi\in\PPF_{n+1}$ satisfies $\pi_1=1$, removing this instance of 1 returns a parking function of length $n$.  There are $(n+1)^{n-1}$ parking functions of length $n$, and so this case follows.

 For the $j>1$ case, let $\sigma \in \PF_n$ with $N_1(\sigma)=k$, i.e., $\sigma$ has $k$ ones, and such that $\sigma_1=j$. Since $\sigma_1=j>1$, we have $1\leq k\leq n-1$. We can form a parking function $\pi \in \PPF_{n+1}$ with $k+1$ ones and $\pi_1=j$ by inserting a 1 in one of the $n$ spots between elements of $\sigma$ or at the end of $\sigma$. Each $\pi \in \PPF_{n+1}$ with $\pi_1=j$ will arise by this construction $k+1$ times; once for each $i$ with $\pi_i=1$. 
\end{proof}

A central ingredient in \Cref{thm:related to fnjk} is $f_n(j, k)$. We develop a formula for its value when $j=2,3,\ldots,n$ and $k=1,2,\ldots,n-1$. To this end, we recall the definition of a parking function shuffle. 

\begin{definition}[Parking Function Shuffle, p.~129 \cite{diaconis2017probabilizing}]
We say that $\pi_2,\pi_3,\ldots,\pi_n$ is a \defterm{parking function shuffle} of $\alpha\in\PF_{k-1}$ and $\beta\in \PF_{n-k}$, if $\pi_2,\pi_3,\ldots,\pi_n$ is any permutation of the union of the two words $\alpha$ and $\beta+(k)$, where $\beta+(k)$ is the set of values of $\beta$ shifted by $k$. The set of all such shuffles is denoted by $Sh(k-1,n-k)$.
\end{definition}

For example, a shuffle of $\alpha=(1,3,2,2)$ and $\beta =(2,1,2)$ is 
$(3,\underline{6},2, 1, 2,\underline{7},\underline{7})$ where we underline the original $\beta$ shifted by $k=5$. 
Diaconis and Hicks \cite{diaconis2017probabilizing} proved that $(k, \pi_2,\pi_3,\ldots,\pi_n)$ is a valid parking function if and only if $(\pi_2,\pi_3,\ldots,\pi_n) \in Sh(\ell-1,n-\ell)$ for some $\ell\geq k$, leading to the following count.

\begin{corollary}[Corollary 1 in \cite{diaconis2017probabilizing}]\label{cor:PF with pi_1=j}
    The number of $\pi\in\PF_n$ with $\pi_1=j$ is
    \[\sum_{\ell=j}^{n}\binom{n-1}{\ell-1}|\PF_{\ell-1}|\cdot|\PF_{n-\ell}|=\sum_{\ell=j}^{n}\binom{n-1}{\ell-1}\ell^{\ell-2}\cdot(n-\ell+1)^{n-\ell-1}.\]
\end{corollary}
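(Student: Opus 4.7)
The plan is to invoke the shuffle characterization of Diaconis--Hicks that is recalled just above the statement: $(j, \pi_2, \ldots, \pi_n) \in \PF_n$ if and only if $(\pi_2, \ldots, \pi_n) \in Sh(\ell - 1, n - \ell)$ for some $\ell \geq j$. I would therefore partition $\{\pi \in \PF_n : \pi_1 = j\}$ according to the parameter $\ell$ and count each block separately.

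For a fixed $\ell \in \{j, j+1, \ldots, n\}$, specifying an element of $Sh(\ell - 1, n - \ell)$ amounts to three independent choices: (i) selecting which of the $n - 1$ coordinates $\pi_2, \ldots, \pi_n$ carry the entries of $\alpha$, contributing $\binom{n-1}{\ell-1}$; (ii) choosing $\alpha \in \PF_{\ell-1}$, contributing $|\PF_{\ell-1}| = \ell^{\ell-2}$; and (iii) choosing $\beta \in \PF_{n-\ell}$, contributing $|\PF_{n-\ell}| = (n-\ell+1)^{n-\ell-1}$. Multiplying and summing over $\ell$ produces the first equality, and the second is just the classical formula $|\PF_m| = (m+1)^{m-1}$ substituted in, with the conventions $1^{-1} = 1$ and $|\PF_0| = 1$ covering the boundary cases $\ell \in \{1, n\}$.

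The subtle point---and the one I would view as the main obstacle---is verifying that the parameter $\ell$ is uniquely determined by $\pi$, so that this summation counts each parking function exactly once rather than overcounting. I would pin $\ell$ down intrinsically as the unique integer satisfying (a) exactly $\ell - 1$ entries of $(\pi_2, \ldots, \pi_n)$ are strictly less than $\ell$, (b) no entry of $(\pi_2, \ldots, \pi_n)$ equals $\ell$, and (c) $\ell \geq j$. Existence and uniqueness of such an $\ell$ would follow from the parking condition on $\pi$ by a standard monotonicity argument on the counting function $m \mapsto |\{i \geq 2 : \pi_i \leq m\}|$. With $\ell$ in hand, the entries of $(\pi_2, \ldots, \pi_n)$ below $\ell$ form $\alpha \in \PF_{\ell-1}$ and those above $\ell$ (shifted down by $\ell$) form $\beta \in \PF_{n-\ell}$, which is the shuffle decomposition sought, completing the bijection.
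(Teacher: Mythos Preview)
Your proposal is correct and follows exactly the route the paper indicates: the paper does not give a detailed proof of this corollary but simply cites the Diaconis--Hicks shuffle characterization and says it ``lead[s] to the following count.'' Your write-up fills in the details of that deduction, including the crucial (and often glossed over) observation that $\ell$ is uniquely determined by $(\pi_2,\ldots,\pi_n)$, so the sum does not overcount. One small remark: your condition (c), $\ell \geq j$, is not needed to pin down $\ell$; conditions (a) and (b) already determine $\ell$ uniquely as the empty spot left after cars $2,\ldots,n$ park in $n$ spots, and $\ell \geq j$ is then automatic from the assumption $\pi \in \PF_n$.
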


The following result is well-known. 

\begin{theorem}[Corollary 1.16 in \cite{yan2015parking}]\label{thm:PF with k ones}
The number of parking functions $\pi\in \PF_{n}$ with exactly $k$ ones, i.e. $N_1(\pi)=k$, is given by 
\[\bigl|\{\pi \in \PF_n: N_1(\pi)=k\}\bigr| = \binom{n-1}{k-1}n^{n-k}.\]
\end{theorem}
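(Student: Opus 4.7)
The plan is to partition the parking functions in question by the positions of their ones and then count the remaining entries via a generalized Pollak circular argument. First, the $k$ positions in $[n]$ for the ones can be chosen in $\binom{n}{k}$ ways. Fixing such a choice, the remaining $n-k$ entries must be integers at least $2$ that, together with the $k$ ones, yield a parking function.

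Writing $\mu_1 \leq \cdots \leq \mu_{n-k}$ for the sorted non-one entries, the full sorted parking function is $(1,\ldots,1,\mu_1,\ldots,\mu_{n-k})$, so the condition $\lambda_i \leq i$ reduces (for $i > k$) to $\mu_{i-k} \leq i$; equivalently, $\mu_j \leq j + k$ for $1 \leq j \leq n-k$. Setting $\nu_j = \mu_j - 1$ then produces a sequence of $n-k$ positive integers whose sorted form satisfies $\nu^*_j \leq j + (k-1)$, i.e., a $\mathbf{u}$-parking function for $\mathbf{u} = (k, k+1, \ldots, n-1)$.

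To count these, I would apply a generalized Pollak argument. Consider $n-k$ cars with preferences in $[n]$ parking around a circle of $n$ spots; exactly $k$ spots remain empty at the end. A preference sequence corresponds to a valid $\mathbf{u}$-parking function precisely when spot $n$ is among these empty spots. Because rotation of preferences by $1 \pmod{n}$ induces a $\mathbb{Z}/n$-action on sequences and cyclically shifts the empty spots by $1$, each of the $n$ spots is empty in exactly $\frac{k \cdot n^{n-k}}{n} = k \cdot n^{n-k-1}$ sequences. Multiplying out yields
\[
\binom{n}{k} \cdot k \cdot n^{n-k-1} = n \binom{n-1}{k-1} \cdot n^{n-k-1} = \binom{n-1}{k-1} n^{n-k},
\]
as claimed. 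As a sanity check, summing over $k$ gives $\sum_{k=1}^{n} \binom{n-1}{k-1} n^{n-k} = (n+1)^{n-1} = |\PF_n|$.

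The main obstacle is the generalized Pollak step, specifically verifying that a $\mathbf{u}$-parking function with $\mathbf{u} = (k, \ldots, n-1)$ is equivalent to a circular parking sequence in $[n]^{n-k}$ for which spot $n$ is empty, and confirming that the rotation action is well-defined on the counting argument. An alternative route would be a direct bijection with labeled rooted forests on $n+1$ vertices in which the number of ones corresponds to a natural tree statistic, but the Pollak-style argument above is self-contained and relies only on elementary symmetry.
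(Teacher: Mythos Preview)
The paper does not supply its own proof of this statement: it is quoted verbatim as Corollary~1.16 of Yan's survey and used as a black box. So there is no ``paper's proof'' to compare against.

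Your argument is correct. The reduction to $\mathbf{u}$-parking functions with $u_j = k+j-1$ is clean, and your generalized Pollak step is valid: with $n-k$ cars on a cycle of $n$ spots, the condition ``spot $n$ stays empty'' is equivalent to all preferences lying in $[n-1]$ and all cars parking linearly on $[n-1]$, which in sorted form reads $\nu_j^* \le (n-1)-(n-k)+j = k-1+j$, exactly your $\mathbf{u}$-condition. The rotation symmetry then gives $k\cdot n^{n-k-1}$ immediately. One small point worth making explicit in a write-up: the $\mathbb{Z}/n$-action by rotation need not be free, but it is a bijection on $[n]^{n-k}$ that shifts the empty-spot set, so the double-counting $\sum_{\text{spots }s}|\{\text{sequences with }s\text{ empty}\}| = k\cdot n^{n-k}$ together with symmetry suffices; you essentially say this, but the phrase ``well-defined on the counting argument'' in your obstacle list undersells how routine it is.

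Your route is in fact close in spirit to the proof one finds in Yan's survey, which also goes through the circular/cycle-lemma picture (there phrased via labeled forests or the cycle lemma for $\mathbf{u}$-parking functions). The forest bijection you mention as an alternative would also work and identifies $k$ with the number of children of the root, but the Pollak version you chose is the shorter path.
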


We are now ready to give a count for $f_n(j, k)$, the number of parking functions of length $n$ with $k$ ones and fixed initial value $j$.

\begin{proposition}\label{prop:f_n(j,k)_count}
    The number of $\pi\in\PF_n$ with $\pi_1=j$ and $N_1(\pi)=k$ is given by
    \begin{equation}\label{eq:f_n(j,k)}
        f_n(j,k)=\sum_{\ell=j}^n\binom{n-1}{\ell-1} \binom{\ell-2}{k-1}(\ell-1)^{\ell-1-k} \cdot (n-\ell+1)^{n-\ell-1}.
    \end{equation}
    
\end{proposition}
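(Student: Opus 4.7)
The plan is to refine the Diaconis–Hicks shuffle decomposition used in Corollary 2.3 by tracking the number of ones. For $\pi\in\PF_n$ with $\pi_1=j\ge 2$, the Diaconis–Hicks characterization says that $(\pi_2,\dots,\pi_n)$ is uniquely an element of $Sh(\ell-1,n-\ell)$ for some $\ell\ge j$, that is, an interleaving of a parking function $\alpha\in\PF_{\ell-1}$ with the shifted word $\beta+(\ell)$ where $\beta\in\PF_{n-\ell}$. I would begin the proof by recalling this bijection and stressing that the value $\ell$ is determined by $\pi$, so summing over $\ell\ge j$ partitions $\{\pi\in\PF_n:\pi_1=j\}$ into disjoint pieces indexed by $\ell$.

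Next I would track the ones. Since $\pi_1=j\ge 2$ contributes no ones, and since every entry of $\beta+(\ell)$ is at least $\ell+1\ge 2$, the only ones in $\pi$ come from $\alpha$. Therefore
\[
N_1(\pi)=N_1(\alpha),
\]
and the condition $N_1(\pi)=k$ is equivalent to $N_1(\alpha)=k$ in the underlying $\alpha\in\PF_{\ell-1}$. Now \Cref{thm:PF with k ones} (applied with length $\ell-1$) tells me that the number of $\alpha\in\PF_{\ell-1}$ with $N_1(\alpha)=k$ equals $\binom{\ell-2}{k-1}(\ell-1)^{\ell-1-k}$. The number of ways to interleave a word of length $\ell-1$ with a word of length $n-\ell$ is $\binom{n-1}{\ell-1}$, and the number of choices for $\beta\in\PF_{n-\ell}$ is $(n-\ell+1)^{n-\ell-1}$.

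Multiplying these three independent counts and summing over $\ell$ from $j$ to $n$ yields exactly the stated formula
\[
f_n(j,k)=\sum_{\ell=j}^n\binom{n-1}{\ell-1}\binom{\ell-2}{k-1}(\ell-1)^{\ell-1-k}(n-\ell+1)^{n-\ell-1}.
\]
I expect the proof itself to be quite short; the main thing to be careful about is the edge behavior of the formula. For the term $\ell=n$ one needs the standard conventions $|\PF_0|=1$ and $1^{-1}=1$ so that $(n-\ell+1)^{n-\ell-1}=1$, and for the term $\ell=2$ one needs $\binom{0}{k-1}$ to vanish unless $k=1$ (which matches the fact that $\alpha\in\PF_1=\{(1)\}$ forces $N_1(\alpha)=1$). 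No genuine obstacle arises — the work is entirely in verifying that the Diaconis–Hicks decomposition is compatible with the statistic $N_1$, which follows immediately from the shift by $\ell\ge 2$.
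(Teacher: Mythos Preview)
Your proposal is correct and follows essentially the same route as the paper's proof: both refine the Diaconis--Hicks shuffle decomposition behind Corollary~\ref{cor:PF with pi_1=j} by observing that, since $\beta$ is shifted by $\ell\ge j\ge 2$, all ones in $\pi$ must come from $\alpha\in\PF_{\ell-1}$, and then invoke Theorem~\ref{thm:PF with k ones} to replace the factor $|\PF_{\ell-1}|$ by $\binom{\ell-2}{k-1}(\ell-1)^{\ell-1-k}$. Your version is slightly more explicit about the uniqueness of $\ell$ and the boundary conventions, but the argument is the same.
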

\begin{proof}
Now when shuffling $\alpha\in\PF_{\ell-1}$ and $\beta\in\PF_{n-\ell}$, the only ones appearing in the shuffle must have come from $\alpha$, which we know has length $\ell-1$. This is because when shuffling $\alpha$ and $\beta$ we would increase all values in $\beta$ by $\ell$. This means that we have to count all possible $\alpha\in\PF_{\ell-1}$ having  $k$ ones, which by \Cref{thm:PF with k ones} is given by 
\[\binom{\ell-2}{k-1}(\ell-1)^{\ell-1-k}.\]
Replacing the factor of $|\PF_{\ell-1}|=\ell^{\ell-2}$ with $\binom{\ell-2}{k-1}(\ell-1)^{\ell-1-k}$ in \Cref{cor:PF with pi_1=j} we get that the number of $\pi\in\PF_{n}$ with $\pi_1=j$ and $N_1(\pi)=k$ is
    \begin{align*}
        &\sum_{\ell=j}^n\binom{n-1}{\ell-1} \binom{\ell-2}{k-1}(\ell-1)^{\ell-1-k} \cdot |\PF_{n-\ell}|\\
        &\hspace{1in}=\sum_{\ell=j}^n\binom{n-1}{\ell-1} \binom{\ell-2}{k-1}(\ell-1)^{\ell-1-k} \cdot (n-\ell+1)^{n-\ell-1}.\qedhere
    \end{align*}
\end{proof}

\begin{remark}\label{rk:pos}
    Since the binomial coefficient $\binom{n}{k}$ is $0$ for $k>n,$ the sum in \eqref{eq:f_n(j,k)} is equivalent to \[f_n(j,k)=\sum_{\ell=\max(j,k+1)}^n\binom{n-1}{\ell-1} \binom{\ell-2}{k-1}(\ell-1)^{\ell-1-k} \cdot (n-\ell+1)^{n-\ell-1}.\] 
\end{remark}

We now come to our main result. The proof makes use of special cases of Abel's extensions of the binomial theorem, a portion of which we recall below.

\begin{theorem}[Abel's extension of the binomial theorem, derived from Pitman \cite{Pitman} and Riordan \cite{Riordan}]\label{Abel}
Let
\begin{equation*}\label{b}
A_n(x, y; p, q)=\sum_{s=0}^n \binom{n}{s} (x+s)^{s+p} (y+n-s)^{n-s+q}.
\end{equation*}
Then
\begin{equation*}\label{b2}
A_n(x, y; p, q)=A_{n-1}(x, y+1; p, q+1)+A_{n-1}(x+1, y; p+1, q), \textrm{ and}
\end{equation*}
\begin{equation*}\label{b3}
A_n(x, y; p, q)=\sum_{s=0}^{n} \binom{n}{s}s!(x+s)A_{n-s}(x+s, y; p-1, q).
\end{equation*}
Moreover, the following special instances hold via the basic recurrences listed above:
\begin{equation}\label{2}
A_n(x, y; -1, 0)=x^{-1}(x+y+n)^n.
\end{equation}
\begin{equation}\label{3}
A_n(x, y; -1, 1)=x^{-1} \sum_{s=0}^n \binom{n}{s} (x+y+n)^s (y+n-s) (n-s)!.
\end{equation}
\end{theorem}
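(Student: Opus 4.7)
The plan is to handle the two basic recurrences first and then derive the two closed-form instances as consequences. For the first recurrence, I would split the binomial coefficient in the defining sum via Pascal's identity $\binom{n}{s}=\binom{n-1}{s-1}+\binom{n-1}{s}$. The $\binom{n-1}{s}$ piece immediately gives $A_{n-1}(x,y+1;p,q+1)$ since $(y+n-s)^{n-s+q}=(y+1+(n-1)-s)^{(n-1)-s+(q+1)}$. The $\binom{n-1}{s-1}$ piece, after the shift $s\mapsto s+1$, yields $A_{n-1}(x+1,y;p+1,q)$. This is a purely algebraic check. For the second recurrence I would substitute the definition of $A_{n-s}(x+s,y;p-1,q)$ into the right-hand side, combine binomial coefficients via $\binom{n}{s}\binom{n-s}{t}=\binom{n}{s+t}\binom{s+t}{s}$, and collect the coefficient of $(x+u)^{u+p}(y+n-u)^{n-u+q}$ for each $u=s+t$. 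Rather than pushing the resulting inner sum through in one step, the cleanest route is probably to prove this recurrence by induction on $n$, using the first recurrence at each step.

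For the closed-form identity \eqref{2}, I would invoke the classical Abel binomial identity
$$(x+y)^n = \sum_{s=0}^n \binom{n}{s}\,x\,(x+s)^{s-1}(y-s)^{n-s}$$
and substitute $y\mapsto y+n$, producing $x\cdot A_n(x,y;-1,0)=(x+y+n)^n$ directly. For \eqref{3}, I would specialize the first recurrence at $p=-1$, $q=0$ and solve:
$$A_{n-1}(x,y+1;-1,1)=A_n(x,y;-1,0)-A_{n-1}(x+1,y;0,0),$$
then shift $n\mapsto n+1$ and $y\mapsto y-1$ to get $A_n(x,y;-1,1)=A_{n+1}(x,y-1;-1,0)-A_n(x+1,y-1;0,0)$. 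Apply \eqref{2} to the first term, and evaluate the second using the second recurrence with $p=q=0$, which (after telescoping via $A_{n-s}(x+s,y;-1,0)=(x+s)^{-1}(x+y+n)^{n-s}$) gives
$$A_m(x,y;0,0)=m!\sum_{s=0}^{m}\frac{(x+y+m)^{m-s}}{(m-s)!}.$$
Setting $z=x+y+n$ and writing $x=z-y-n$, the two contributions combine, the $z^{n+1}$ terms cancel, and the remaining sum reindexes to $x^{-1}\sum_{s=0}^n\binom{n}{s}z^s(y+n-s)(n-s)!$, matching \eqref{3}.

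The main obstacle I anticipate is justifying the second recurrence cleanly: the direct double-sum manipulation produces an inner sum of the form $\sum_{s=0}^{u}(x+s)/(u-s)!$ that does not admit an elementary closed form, so an inductive argument leveraging the first recurrence is preferable. Once both recurrences are in hand and the classical Abel identity is cited for \eqref{2}, the derivation of \eqref{3} is a short, if slightly fussy, algebraic consolidation rather than a genuinely new calculation.
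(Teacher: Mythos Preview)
The paper does not supply a proof of this theorem at all: it is stated as a cited result from Pitman and Riordan and is then used as a black box in the proof of \Cref{thm:expected value of pi1}. So there is no ``paper's own proof'' against which to compare your proposal.

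For what it is worth, your outline is sound. The Pascal-identity split gives the first recurrence immediately, and your instinct to prove the second recurrence inductively rather than by brute-force double-sum manipulation is the right one. Your derivation of \eqref{3} is also correct: specializing the first recurrence at $(p,q)=(-1,0)$, applying \eqref{2}, and using the second recurrence to obtain $A_m(x,y;0,0)=m!\sum_{s=0}^m (x+y+m)^{m-s}/(m-s)!$ does lead to the stated formula after expanding $y+n-s=z-x-s$ and telescoping. One small inaccuracy in your narration: you say ``the $z^{n+1}$ terms cancel,'' but in fact the telescoping on the right-hand side \emph{produces} the single $z^{n+1}$ term needed to match $x^{-1}z^{n+1}$ from \eqref{2}; the algebra is right, only the description is off.
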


\expectedvalueresult*

\begin{proof}
Using \Cref{thm:related to fnjk} we have
    \begin{align} 
    \E[\pi_1 | \pi \in \PPF_{n+1}]&=\sum_{j=1}^{n}j\cdot \PR(\pi_1=j|\pi\in \PPF_{n+1})\nonumber\\
    &=\frac{1}{n^n}\left((n+1)^{n-1}+\sum_{j=2}^n j\sum_{k=1}^{n-1}\frac{n}{k+1}f_n(j,k)\right).\label{eq:expected value}
    \end{align}
   For ease of notation, denote the following sum by $A$:   
    \begin{align}
      A&\coloneqq \sum_{j=2}^n j\sum_{k=1}^{n-1}\frac{n}{k+1}f_n(j,k)\nonumber\\
      &=  \sum_{j=2}^n j\sum_{k=1}^{n-1}\frac{n}{k+1}\sum_{\ell=j}^n\binom{n-1}{\ell-1}\binom{\ell-2}{k-1}(\ell-1)^{\ell-1-k}(n-\ell+1)^{n-\ell-1},\label{eq:A}
    \end{align}
    where we apply Proposition \ref{prop:f_n(j,k)_count} in the second equality. Notice that as pointed out in \Cref{rk:pos}, the binomial coefficient \(\binom{\ell-2}{k-1}\) is non-zero only when $k \leq \ell-1$ ($k$ ones in a parking function of length $\ell-1$). 
    In \eqref{eq:A}, making the change of variables $s=n-\ell$, we find
    \begin{align}
         A &=\sum_{j=2}^{n}j\sum_{k=1}^{n-1}\frac{n}{k+1}\sum_{s=0}^{n-j}\binom{n-1}{s}\binom{n-s-2}{k-1}(n-s-1)^{n-s-k-1}(s+1)^{s-1}\nonumber\\
        &=\sum_{k=1}^{n-1}\frac{n}{k+1} \Big( \sum_{j=2}^{n}j \sum_{s=0}^{n-j}\binom{n-1}{s}\binom{n-s-2}{k-1}(n-s-1)^{n-s-k-1}(s+1)^{s-1} \Big)\nonumber\\
        &=\sum_{k=1}^{n-1}\frac{n}{k+1} \Bigg( \sum_{s=0}^{n-2}  \binom{n-1}{s}\binom{n-s-2}{k-1}(n-s-1)^{n-s-k-1}(s+1)^{s-1}  \Big(\sum_{j=2}^{n-s} j\Big)\Bigg), \label{eq:A'}
    \end{align}
 where the last equality in \eqref{eq:A'} is obtained by interchanging the order of summation. Replacing the sum of $j$ with $\frac{1}{2} (n-s-1)(n-s+2)$, we can reorganize $A$ as follows:
\begin{align}
A&=\frac{1}{2}\sum_{k=1}^{n-1}\frac{n}{k+1} \Bigg( \sum_{s=0}^{n-2}  \binom{n-1}{s}\binom{n-s-2}{k-1}(n-s-1)^{n-s-k}(s+1)^{s-1}   (n-s+2)\Bigg)\nonumber\\
&=\frac{1}{2}\sum_{s=0}^{n-1} \binom{n-1}{s} (s+1)^{s-1}   (n-s+2) \Bigg( \sum_{k=1}^{n-s-1}  \binom{n-s-2}{k-1}(n-s-1)^{n-s-k} \frac{n}{k+1}\Bigg),\label{eq:will simplify this junk}
\end{align}
where the last equality in \eqref{eq:will simplify this junk} involves interchanging the order of summation and setting a more explicit upper bound on $k$ for the binomial coefficient $\binom{n-s-2}{k-1}$ to be non-zero. We also note that the inner sum for $k$ becomes empty when taking $s=n-1$. Thus, changing the upper bound on $s$ from $n-2$ to $n-1$ has no effect on the value of $A$; we are implementing it for the application of Abel's binomial theorem later. We proceed to simplify the inner sum over the indexing variable $k$ to arrive at
\begin{align}
    &\sum_{k=1}^{n-s-1}\frac{n}{k+1}\binom{n-s-2}{k-1}(n-s-1)^{n-s-k}\nonumber\\
   &=\frac{n}{n-s}\sum_{k=2}^{n-s}(k-1)\binom{n-s}{k}(n-s-1)^{n-s-k}\qquad\mbox{(by reindexing $k \leftarrow k+1$)}\nonumber\\
    &=\frac{n}{n-s}\Bigg[\sum_{k=0}^{n-s}(k-1)\binom{n-s}{k}(n-s-1)^{n-s-k}\nonumber\\
    &\hspace{1in}-\left(-1\binom{n-s}{0}(n-s-1)^{n-s}+0\binom{n-s}{1}(n-s-1)^{n-s-1}\right)\Bigg]\nonumber\\
    &=\frac{n}{n-s}\Bigg[\sum_{k=0}^{n-s}k\binom{n-s}{k}(n-s-1)^{n-s-k}\nonumber\\
    &\hspace{1in}-\sum_{k=0}^{n-s}\binom{n-s}{k}(n-s-1)^{n-s-k}+(n-s-1)^{n-s}\Bigg].\label{return here}
\end{align}
By reindexing $k \leftarrow k-1$ in the first sum, both the first sum and the second sum over $k$ in \eqref{return here} equal $(n-s)^{n-s}$ after applying the binomial theorem, and so the entire expression \eqref{return here} reduces to
\[\frac{n}{n-s}(n-s-1)^{n-s}.\]
Returning to \eqref{eq:will simplify this junk} we have that 
\begin{align}
A&=\frac{1}{2}\sum_{s=0}^{n-1}\binom{n-1}{s}(s+1)^{s-1}(n-s+2)\frac{n}{n-s}(n-s-1)^{n-s} \nonumber\\
    &=\frac{1}{2}\sum_{s=0}^{n-1}\binom{n}{s}(n-s+2)(n-s-1)^{n-s}(s+1)^{s-1}\nonumber\\
    &=\frac{1}{2}\sum_{s=0}^{n-1}\binom{n}{s}((n-s-1)+3)(n-s-1)^{n-s}(s+1)^{s-1}\nonumber\\
    &=\frac{1}{2}\left(\sum_{s=0}^{n}\binom{n}{s}((n-s-1)+3)(n-s-1)^{n-s}(s+1)^{s-1}\right)-\frac{1}{2}2(-1)^0(n+1)^{n-1}\nonumber\\
    &=\frac{1}{2}\left(\sum_{s=0}^{n}\binom{n}{s}(n-s-1)^{n-s+1}(s+1)^{s-1}+\sum_{s=0}^{n}3\binom{n}{s}(n-s-1)^{n-s}(s+1)^{s-1}\right)
    -(n+1)^{n-1}\nonumber\\
    &=
    \frac{1}{2}\left(I+II\right)-(n+1)^{n-1},\label{eq:so close}
\end{align}
where 
\begin{align}
    I&=\sum_{s=0}^{n}\binom{n}{s}(n-s-1)^{n-s+1}(s+1)^{s-1}\label{eq:I}\\
    \intertext{and}
    II&=\sum_{s=0}^{n}3\binom{n}{s}(n-s-1)^{n-s}(s+1)^{s-1}\label{eq:II}.
\end{align}
By Abel's \eqref{3} and \eqref{2}, we have that \eqref{eq:I}  and \eqref{eq:II}, respectively become
\begin{align}
    I&=A_n(1,-1;-1,1)\nonumber\\
    &=\sum_{s=0}^{n}\binom{n}{s}n^s(n-s-1)(n-s)!\nonumber\\
    &=\sum_{s=0}^n\frac{n!}{s!}n^s(n-1)-\sum_{s=0}^n\frac{n!}{s!}n^ss\nonumber\\
    &=\sum_{s=0}^n\frac{n!}{s!}n^{s}(n-1)-\sum_{s=0}^{n-1}\frac{n!}{s!}n^{s+1}\nonumber\\
    &=\sum_{s=0}^{n-1}\frac{n!}{s!}n^{s+1}+\frac{n!}{n!}n^{n+1}-\sum_{s=0}^n\frac{n!}{s!}n^{s}-\sum_{s=0}^{n-1}\frac{n!}{s!}n^{s+1}\nonumber\\
    &=n^{n+1}-\sum_{s=0}^n\frac{n!}{s!}n^{s}
    \label{eq:new I}\\
    \intertext{and}
II&=3A_n(1,-1;-1,0)=3n^n\label{eq:new II}.
\end{align}

Finally, substituting \eqref{eq:new I} and \eqref{eq:new II} into \eqref{eq:so close} we have that 
\begin{align}
    A&=\frac{1}{2}\left(n^{n+1}-\sum_{s=0}^n\frac{n!}{s!}n^{s}+3n^n\right)-(n+1)^{n-1}.\label{eq:yay}
\end{align}
Substituting \eqref{eq:yay} into \eqref{eq:expected value} we find the expected value
\begin{align*}
   \E[\pi_1 | \pi \in \PPF_{n+1}]
    &=\frac{1}{n^n}\left((n+1)^{n-1}+\frac{1}{2}\left(n^{n+1}-\sum_{s=0}^n\frac{n!}{s!}n^{s}+3n^n\right)-(n+1)^{n-1}\right)\\
    &=\frac{1}{2n^n}\left(n^{n+1}-\sum_{s=0}^n\frac{n!}{s!}n^{s}+3n^n\right)\\
    &=\frac{1}{2}\left(n+3-\frac{n!}{n^n}\sum_{s=0}^n\frac{n^{s}}{s!}\right),
\end{align*}
which completes the proof.
\end{proof}

Building upon \Cref{thm:expected value of pi1}, we derive the asymptotics for the expected value of $\pi_1$ for large $n$. First, we need a technical lemma.
\begin{lemma} \cite[Lemma 21]{durmic2022probabilistic}\label{CLT}
 Let $X_1, X_2, \dots$ be iid Poisson$(1)$ random variables. Then
\begin{equation*}
\PR(X_1+\cdots+X_n \leq n)=\frac{1}{2}+\frac{2}{3}\frac{1}{\sqrt{2\pi n}}+o\left(\frac{1}{\sqrt{n}}\right).
\end{equation*}
\end{lemma}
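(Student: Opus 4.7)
The plan is to exploit the fact that the sum $S_n := X_1 + \cdots + X_n$ of iid Poisson$(1)$ variables is itself Poisson$(n)$, so that the probability in question reduces to a truncated exponential sum:
\[
\PR(S_n \leq n) = e^{-n} \sum_{k=0}^{n} \frac{n^k}{k!}.
\]
The asymptotics of this partial sum are governed by the classical quantity $\theta_n$ introduced by Ramanujan via
\[
\frac{e^n}{2} = \sum_{k=0}^{n-1} \frac{n^k}{k!} + \theta_n \cdot \frac{n^n}{n!},
\]
and the central input I would invoke is the Ramanujan--Szeg\H{o} theorem, which states that $\theta_n = \tfrac{1}{3} + O(1/n)$ as $n \to \infty$.

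With this in hand, the rearrangement $\sum_{k=0}^{n} \tfrac{n^k}{k!} = \tfrac{e^n}{2} + (1-\theta_n)\tfrac{n^n}{n!}$ yields
\[
\PR(S_n \leq n) = \frac{1}{2} + (1-\theta_n)\cdot \frac{n^n e^{-n}}{n!}.
\]
Stirling's formula $n! = \sqrt{2\pi n}\,(n/e)^n(1 + O(1/n))$ gives $\frac{n^n e^{-n}}{n!} = \frac{1}{\sqrt{2\pi n}}(1+O(1/n))$, and combining this with $1-\theta_n = \tfrac{2}{3}+O(1/n)$ produces
\[
\PR(S_n \leq n) = \frac{1}{2} + \frac{2}{3}\cdot \frac{1}{\sqrt{2\pi n}} + o\!\left(\frac{1}{\sqrt{n}}\right),
\]
as required. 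I would invest the remaining bookkeeping in verifying that the two $O(1/n)$ errors combine to $o(1/\sqrt{n})$, which is immediate since their product is $O(n^{-3/2})$.

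The main obstacle is the Ramanujan--Szeg\H{o} asymptotic $\theta_n \to 1/3$ itself, which is classical but nontrivial; for a self-contained treatment one would either cite it or reproduce Szeg\H{o}'s integral-representation argument for the incomplete gamma function $\Gamma(n+1,n)/n!$. As an alternative route that bypasses $\theta_n$ entirely, one can apply the Edgeworth expansion for integer-valued (lattice) sums to Poisson$(1)$, which has mean $1$, variance $1$, and third cumulant $1$: the expansion takes the form $\PR(S_n \leq k) = \Phi(x_k) + \phi(x_k)\tfrac{1}{\sqrt{n}}\bigl[\tfrac{1-x_k^2}{6} + \tfrac{1}{2}\bigr] + o(1/\sqrt{n})$ with $x_k = (k-n)/\sqrt{n}$, where the first bracketed term is the third-cumulant Edgeworth correction and the second is the lattice continuity correction at an integer threshold. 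Evaluating at $k = n$ gives $x_k = 0$ and $\phi(0) = \tfrac{1}{\sqrt{2\pi}}$, producing exactly $\tfrac{1}{2} + \tfrac{1}{\sqrt{2\pi n}}\bigl(\tfrac{1}{6}+\tfrac{1}{2}\bigr) = \tfrac{1}{2}+\tfrac{2}{3\sqrt{2\pi n}}$. Either approach suffices; I would prefer the Ramanujan--Szeg\H{o} route for its elementary and self-contained flavor.
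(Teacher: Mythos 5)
Your derivation is correct, and it supplies an actual proof where the paper gives none: the paper states this lemma purely as a citation to \cite[Lemma 21]{durmic2022probabilistic} and never reproves it, so there is no internal argument to compare against. Your route is the natural one and, as far as I can tell, essentially the standard way this asymptotic is established: the reduction $\PR(S_n\leq n)=e^{-n}\sum_{k=0}^{n}n^k/k!$ is immediate from the additivity of Poisson variables, the rearrangement $\sum_{k=0}^{n}n^k/k!=\tfrac{e^n}{2}+(1-\theta_n)\tfrac{n^n}{n!}$ follows directly from Ramanujan's defining relation for $\theta_n$, and combining $\theta_n=\tfrac13+O(1/n)$ (Ramanujan--Szeg\H{o}--Watson) with Stirling's formula gives $\tfrac12+\tfrac{2}{3\sqrt{2\pi n}}+O(n^{-3/2})$, which is stronger than the stated $o(1/\sqrt{n})$ error. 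The only external input is the classical fact $\theta_n\to\tfrac13$, which is nontrivial but standard; your citation of it is appropriate, and your bookkeeping of the error terms is right. The alternative Edgeworth route you sketch also lands on the correct constant $\tfrac16+\tfrac12=\tfrac23$ (third-cumulant term plus lattice continuity correction at an integer threshold), though as written it is more of a consistency check than a proof, since the precise form of the lattice correction would itself need to be cited or justified; the Ramanujan route is the cleaner one to commit to.
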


\expectedvalueresultasymp*

\begin{proof}
We recognize that
\begin{equation*}
e^{-n} \sum_{s=0}^{n} \frac{n^s}{s!}
\end{equation*}
in \Cref{thm:expected value of pi1} equals the probability that the sum of $n$ iid Poisson$(1)$ random variables is less than or equal to $n$, and so its asymptotics may be estimated by \Cref{CLT}:
\begin{align}
e^{-n} \sum_{s=0}^n \frac{n^s}{s!} = \frac{1}{2}+\frac{2}{3}\frac{1}{\sqrt{2\pi n}}+o\left(\frac{1}{\sqrt{n}}\right).\label{eq:first}
\end{align}
By Stirling's approximation:
\begin{align} n!= \sqrt{2\pi n} e^{-n} n^n \left(1+\frac{1}{12n}+ o\Big(\frac{1}{n}\Big)\right).\label{eq:second}
\end{align}
Using \eqref{eq:first} and \eqref{eq:second}, we can express the expected value derived in \Cref{thm:expected value of pi1} as follows: 
\begin{align}
\E[ \pi_1 | \pi \in \PPF_{n+1}] &= \frac12 \left ( n+3 -\frac{\sqrt{2\pi n} e^{-n}n^n \left ( 1 + \frac {1}{12n}+ o\left (\frac{1}{n} \right ) \right )}{n^n} \cdot e^n \left (\frac12+\frac23 \frac{1}{\sqrt{2\pi n}}+ o\left (\frac{1}{\sqrt{n}}\right )
\right )\right ) \nonumber
\\
 &= \frac12 \left (n+3 -\sqrt{2\pi n} \left ( \frac12 + \frac23\frac{1}{\sqrt{2\pi n}} + o\left (\frac{1}{\sqrt{n}} \right ) \right ) \right )  \nonumber\\
& = \frac12 \left (n+3 -\sqrt{\frac{\pi n}{2}} - \frac23 + o(1) \right ) \nonumber\\
&= \frac12 \left (n-\sqrt{\frac{\pi n}{2}} + \frac73 + o(1) \right ), \nonumber 
\end{align}
which is our claimed result.
\end{proof}

In the classical parking functions setting,
\cite[Theorem 4]{durmic2022probabilistic} states that for $n$ large and a preference vector $\pi\in[n]^n$ chosen uniformly at random,
\[\E[\pi_1|\pi\in\PF_n]=\frac{n+1}{2}-\frac{\sqrt{2\pi}}{4}n^{1/2}+\frac76+o(1).\]
Using the fact that $(n-1)^{1/2}=n^{1/2}+o(1),$ and applying \Cref{cr:expected value of pi1} to prime parking functions of length $n,$ we find that \[\E[\pi_1|\pi\in \PPF_n]=\frac{n-1}{2}-\frac{\sqrt{2\pi}}{4}(n^{1/2}+o(1))+\frac{7}{6}+o(1).\]  Thus, asymptotically $\E[\pi_1|\pi\in \PPF_n]$ is one less than $\E[\pi_1|\pi\in \PF_n].$ This accords with our intuition, as an alternative interpretation for a classical parking function $\pi=(\pi_1, \pi_2,\dots, \pi_n)$ to be prime is that for all $1\leq i\leq n-1$, at least $i+1$ cars want to park in the first $i$ places, so asymptotically there is a shift down by one for the expected parking preference.

\subsection{Average displacement of prime parking functions} Now that we have the average value of the first car's preference across all prime parking functions of length $n+1$, we compute the average displacement of prime parking functions.

\begin{definition}\label{def:displacement}
    Given a parking function $\pi=(\pi_1,\pi_2,\ldots,\pi_n)$ in which car $i$ parks in spot $p_i$, 
we say the displacement of car $i$ is $d(i)=p_i-\pi_i$, and the displacement of $\pi$ is given by 
$$\text{dis} (\pi)=\sum_{i=1}^n \left(p_i-\pi_i\right)=\frac{n(n+1)}{2}
-\sum_{i=1}^n\pi_i.$$ 
\end{definition}

\expecteddisplacementforPPFs*

\begin{proof}
Recall that prime parking functions are permutation invariant. Using \Cref{thm:expected value of pi1} and \Cref{cr:expected value of pi1} we have that
    \begin{align*}
    \E [\text{dis} (\pi) | \pi \in \PPF_{n+1}]&=\sum_{i=1}^{n+1}(i-\E[\pi_i| \pi \in \PPF_{n+1}])\\
    &=\frac{(n+1)(n+2)}{2}-(n+1)\E[\pi_1| \pi \in \PPF_{n+1}] \nonumber\\
    &=\frac{(n+1)(n+2)}{2}-\frac{n+1}{2}\left(n-\sqrt{\frac{\pi n}{2}}+\frac73+o(1)
    \right)\nonumber\\
    &=\frac{n+1}{2}\left(2+\sqrt{\frac{\pi n}{2}}-\frac73+
    o(1)\right)\nonumber\\
    &=\frac{\sqrt{2\pi}}{4}n^{3/2}-\frac{n}{6}+o(n). \nonumber\qedhere
    \end{align*}
\end{proof}

\section{Displacement-enumerator results}\label{sec: Displacement Enum}

In their work on the connections between parking functions and lattice paths, Selig and Zhu \cite{SZ} showed that in addition to the well-known connection between parking functions and Dyck paths, there is an intriguing and less studied connection between parking functions and \Luka~paths. In particular, the displacement of a parking function can be interpreted as the area under an associated \Luka~path. This viewpoint gives a helpful way to understand displacement statistics in terms of lattice path geometry.  Building on this idea, we focus on prime parking functions and examine a weighted generating function that keeps track of their total displacement. 
We show that the displacement-enumerator can be expressed as a sum over \Luka~paths, with weights determined by step heights. 
This result extends known enumerators for classical parking functions to the prime setting, where the additional constraint that paths stay strictly above the $x$-axis (except at the endpoints) adds an interesting layer of structure.

We start by introducing relevant notation and some of the needed terminology from \cite{SZ} relating to \Luka~paths.

\begin{notation}
For the remainder of the paper, we denote the displacement-enumerator of the set of prime parking functions of length $n+1$ by $\PPF_{n+1}(q)$. So, we have
 \[\PPF_{n+1}(q)= \sum_{\pi \in \PPF_{n+1}} q^{\disp(\pi)}.\]
\end{notation}

Next, we recall the definition of a \Luka~word and a \Luka~path, as given by Selig and Zhu in  \cite[Definition 1.3]{SZ}.

\begin{definition}\label{def:luk_word}
A \defterm{\Luka~{word}} of length $n$ is a sequence $\ell = (\ell_1,\ell_2, \ldots, \ell_n)$ of integers $\ell_i \geq -1$, for all $i\in[n]$, such that:
\begin{itemize}[topsep=2pt, noitemsep]
\item For any $k \in [n]$, we have $\sum\limits_{i=1}^k \ell_i \geq 0$.
\item We have $\sum\limits_{i=1}^n \ell_i = 0$.
\end{itemize}
\end{definition}

Following the conventions in \cite{SZ}, 
 \Luka~words can be represented with  lattice paths by associating each $\ell_i$ with a step of the form $(\ell_i+1,\ell_i)$. Throughout this section, we may refer to step  $(\ell_i+1,\ell_i)$ as the step $\ell_i$. 
These  lattice paths are from $(0,0)$ to $(n,0)$ and they never fall below the $x$-axis. We call such a lattice path  a \defterm{\Luka~path} of length $n$ and  denote the set of all \Luka~paths of length $n$ by $\LP_n$.

Given a parking function $\pi=(\pi_1,\pi_2,\ldots, \pi_n)\in \PF_n$, one can associate it to a \Luka~path by defining $\ell=(\ell_1,\ell_2,\ldots,\ell_n)$ where $\ell_j= | \{ i : \pi_i=j\}|-1$ for each $j\in [n]$.  A bijection between the set of 
weakly increasing parking functions and 
\Luka~paths was established in \cite{SZ}. In addition, it was shown in \cite[Theorem 2.1]{SZ} that the area under the \Luka~path is equal to the total displacement of the parking function.

Our goal is to use this connection to find an expression for the displacement-enumerator for prime parking functions. For this purpose, we first extend the definition of \Luka~path to that of prime \Luka~path in the following natural way.

\begin{definition}
    A \Luka~path of length $n$ 
    is called \defterm{prime} if it only touches the $x$-axis at $(0,0)$ and $(n,0)$.
\end{definition}

\begin{example} 
\label{ex:L-path}
Consider the (prime) parking function $\pi=(1,1,1,3,4,4,6)$. The corresponding \Luka~ word is  $\ell=(2,-1,0,1,-1,0,-1)$ and the corresponding  \Luka~path is displayed in blue 
in \Cref{fig:Lukapath}.  As the path only touches the $x$-axis
 at $(0,0)$ and $(7,0)$, it is a prime \Luka~path of length $7$. 
 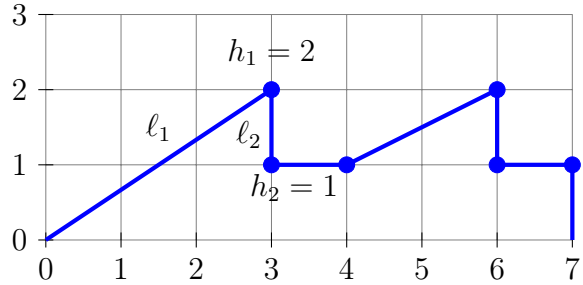
\begin{figure}[htbp]
    \centering
    \begin{tikzpicture}
           \draw[step=1cm,gray,very thin] (0,0) grid (7,3);
           \foreach \x in {0,...,3} {
        \draw (0.1,\x) -- (-0.1,\x) node[left] {\x};
    }
    \foreach \x in {0,...,7} {
    \draw (\x,0.1) -- (\x,-0.1) node[below] {\x};
    }
    \draw[ultra thick, blue](0,0)--(3,2)--(3,1)--(4,1)--(6,2)--(6,1)--(7,1)--(7,0);
    \foreach \x/\y in { 3/2, 3/1, 4/1,6/2, 6/1, 7/1} {
        \filldraw[blue] (\x,\y) circle (3pt);
    }
    \node at (3,2.5) {$h_1=2$};
    \node at (1.5,1.5) {$\ell_1$};
    \node at (2.7,1.4) {$\ell_2$};
    \node at (3.3,.7) {$h_2=1$};
    \end{tikzpicture}
    \caption{The (prime) \Luka~path corresponding to $\pi = (1,1,1,3,4,4,6) \in \PPF_7$.}
    \label{fig:Lukapath}
\end{figure}
\end{example}

As we see below, one can collect the height information of a \Luka~path (after each step) in a sequence.

\begin{notation}
    Given a parking function $\pi \in \PF_n$ with its associated \Luka~path $\ell$,  define the \textit{height sequence}  of $\pi$ as $h=(h_0,h_1,h_2,\ldots, h_n)$ such that $ h_0= h_n=0$ and
    \[h_j\coloneqq \sum_{i=1}^j \ell_i\] 
    for each $j\in [n-1]$. Notice that  $h_j$  is indeed the height of the corresponding \Luka~path after step $\ell_j$ and $h_{j}-h_{j-1}=\ell_j$ for each $j\in [n]$. 
\end{notation}

\begin{example}
     The height sequence of the (prime) parking function given in \Cref{ex:L-path}  is $h=(0,2,1,1,2,1,1,0)$.
\end{example}

\begin{remark}\label{rem:height_relations}
  Given a parking function $\pi=(\pi_1,\pi_2,\ldots, \pi_n) \in \PF_n$ with the corresponding height sequence $h=(h_0,h_1,\ldots, h_n)$, it is worth noticing that  
    \[h_{j}-h_{j-1}+ 1= |\{ i: \pi_{i}=j\}|\]
   for each $j\in [n]$. As a result, one can express the sum of all the preferences in $\pi$ in the following way:
   \[ \sum_{i=1}^{n}\pi_i = \sum_{j=1}^{n} j(h_j-h_{j-1}+1).\]
Lastly, we have $\displaystyle \sum_{j=1}^{n} (h_{j}-h_{j-1}+1) = \sum_{j=1}^{n} (\ell_{j} +1)= n$.
\end{remark}

In what follows, we  use these height sequences in expressing the displacement-enumerator of prime parking functions.

\PPFDisplacementEnumerator*
\begin{proof}
Recall from the definition of prime parking functions that every prime parking function $\pi' \in \PPF_{n+1}$ can be obtained by inserting a $1$ into a parking function $\pi \in \PF_n$. Equivalently, given a prime parking function $\pi'$, we can recover an associated parking function $\pi$ by removing one occurrence of $1$ from $\pi'$. This relationship allows us to connect the set of prime parking functions of size $n+1$ to the set of parking functions of size $n$. See the proof of Theorem \ref{thm:related to fnjk} for details. In particular, we can use this correspondence to express the displacement-enumerator of prime parking functions in terms of classical parking functions, as follows:
\[\PPF_{n+1}(q)=\sum_{\pi' \in \PPF_{n+1}} q^{\disp(\pi')} = \sum_{\pi \in \PF_n} \Big( \frac{n+1}{h_1-h_0+2}  \Big) q^{\sum\limits_{i=1}^{n+1}i-(1+\sum\limits_{i=1}^{n}\pi_i)},\]
where \((h_0, h_1, \ldots, h_n)\) denotes the height sequence associated to the parking function \(\pi \in \PF_n\). Moreover, $h_j-h_{j-1}+1$ is the number of occurrences of $j$ in $\pi$. So,   \(h_1 - h_0 + 2\) is the number of entries equal to 1 in the prime parking function \(\pi' \in \PPF_{n+1}\). This factor is used to derive the final expression in the formula above, where the sum over \(\pi' \in \PPF_{n+1}\) is rewritten as a weighted sum over \(\pi \in \PF_n\).

We focus on the exponent of \( q \) in the final expression before proceeding further. We show that this exponent can be rewritten in terms of the entries of the height sequence. Observe the following sequence of equalities:
$$
q^{\sum\limits_{i=1}^{n+1}i-(1+\sum\limits_{i=1}^{n}\pi_i)}
= q^n q^{(1+2+\cdots+ n) - \sum\limits_{i=1}^{n} i(h_i-h_{i-1}+1)}  = q^n q^{-\sum\limits_{i=1}^{n} i(h_i-h_{i-1})}  = q^n q^{\sum\limits_{i=0}^{n}h_i} = q^n \prod_{j=1}^{n}q^{h_{j}}.
$$
Thus, we have
\[
\PPF_{n+1}(q)= 
\sum_{\pi \in \PF_n} (n+1) q^n\Big( \frac{1}{h_1-h_0+2}  \Big) \prod_{j=1}^{n} q^{h_{j}}.
\]

To obtain the desired expression, we first note that different parking functions can share the same height sequence.  For a fixed height sequence \( (h_0, h_1, \ldots, h_n) \), the corresponding parking function $\pi=(\pi_1,\pi_2, \dots, \pi_n)$ satisfies $h_{j}-h_{j-1}+ 1= |\{ i: \pi_{i}=j\}|$. Summing over height sequences instead of parking functions thus introduces a combined multiplicity factor of $\prod_{j=1}^n \frac{n! }{(h_{j}-h_{j-1}+1)!}$ in the product. Since there is a one-to-one correspondence between \L{}ukasiewicz paths and height sequences, we can sum over all \L{}ukasiewicz paths of length \( n \), as shown below:
\begin{align*}
\PPF_{n+1}(q)
&= (n+1)! ~~q^n \sum_{\substack{\text{all possible}\\\text{\Luka~paths with} \\ \text{height sequence } \\
(h_0, h_1, \dots, h_n)}} \Big( \frac{1}{h_1-h_0+2}  \Big) ~~ \prod_{j=1}^{n} \frac{q^{h_{j}}}{(h_{j}-h_{j-1}+1)!}.  \qedhere
\end{align*}
\end{proof}

\begin{remark}
    In a similar fashion, one can express the displacement-enumerator of prime parking functions  in terms of  prime \Luka~paths as follows: 
\begin{equation*}
\sum_{\pi' \in \PPF_{n+1}} q^{\disp(\pi')} =(n+1)! \sum_{
\substack{
\text{all possible}
\\
\text{\Luka~paths} 
\\
\text{with height sequence} 
\\
(h_0, h_1, \dots, h_{n+1}): \, h_i \geq 1 \, \forall\, 1\leq i\leq n}} \prod_{j=1}^{n+1} \frac{q^{h_{j}}}{(h_{j}-h_{j-1}+1)!}.
\end{equation*}
The corresponding formula for the displacement-enumerator of (classical) parking functions  in terms of \Luka~paths was derived earlier by Elvey-Price in \cite[Lemma 2.3]{Elvey}.
\end{remark}

This concludes our discussion on the displacement-enumeration of prime parking functions in terms of \L{}ukasiewicz paths. In the following section, we turn our attention to the structure of \L{}ukasiewicz paths themselves in order to study additional statistics of prime parking functions.

\section{\Luka~paths and descents}\label{sec:L paths}

In this section, we focus on descent statistics of \Luka~paths.  Descent sets and related statistics on parking functions have been studied in \cite{celano2025inversionsparkingfunctions,PFStats, harris2024parkingfunctionsfixedset, Schumacher}. Our goal is to express the descent statistic in terms of \Luka~paths. We do this by introducing a labeled version of \Luka~paths, which naturally correspond to parking functions. This correspondence allows us to describe the descent, ascent, and tie sets directly from the labeled paths. 

We start this section by recalling the notion of descent and other related statistics. Given a tuple $\x = (x_1, x_2, \ldots, x_n) \in [n]^n$,  
we say that $\x$ has a \defterm{descent} at $i$ if $x_i > x_{i+1}$,  
an \defterm{ascent} at $i$ if $x_i < x_{i+1}$,  
and a \defterm{tie} at $i$ if $x_i = x_{i+1}$.  
The \defterm{descent set}, \defterm{ascent set}, and \defterm{tie set} of $\x$ are defined as
\begin{align*}
    \Des(\x) &\coloneqq \{\, i \in [n-1] : x_i > x_{i+1} \,\}, \\
    \Asc(\x) &\coloneqq \{\, i \in [n-1] : x_i < x_{i+1} \,\}, \\
    \Tie(\x) &\coloneqq \{\, i \in [n-1] : x_i = x_{i+1} \,\}.
\end{align*}

We use lower-case versions to denote the cardinality of the corresponding set, e.g. $\des(\x)=|\Des(\x)|.$

\begin{definition}\label{def:labeled_Luka}
    Let $\ell = (\ell_1, \ell_2, \ldots, \ell_n)$ be a \Luka~path of length $n$,  
    and let $\mathcal{B} = (\beta_1, \beta_2, \ldots, \beta_n)$ be an ordered set partition of $[n]$  
    whose (possibly empty) parts satisfy $|\beta_i| = \ell_i + 1$ for all $i \in [n]$.  
    A \defterm{labeled \Luka~path} is the pair $L = (\ell, \mathcal{B})$ obtained by labeling  
    the portion of the $x$-axis under step $\ell_i$ with the elements of $\beta_i$,  
    listed in increasing order.   If $\ell_i + 1 = 0$, then step $\ell_i$ is vertical, so $\beta_i$ is empty and no labeling occurs.

    We let $\alpha_L$ denote the permutation obtained by ordering the elements of each $\beta_j$ in increasing order for $j \in [n]$. This permutation $\alpha_L$ is useful in describing the descents of parking functions in terms of $L$.
\end{definition}

  \begin{remark} 
  Each labeled \Luka~path $L = (\ell, \mathcal{B})$ determines a parking function $\alpha$ by setting  
   $\alpha_k = m$ whenever $k \in \beta_m$.  
   Equivalently, the interval on the $x$-axis under step $\ell_m$ is labeled by the elements of $\beta_m$,  
   which may be interpreted as cars, with their block $\beta_m$ indicating their preference. 

  This process can also be reversed. As discussed in \Cref{def:luk_word}, each parking function determines a \L{}ukasiewiczpath $\ell$. For the corresponding labeled \L{}ukasiewiczpath, the set partition $\mathcal{B}$ is obtained by labeling, along the $x$-axis under each step $\ell_i$, the intervals with the cars that prefer spot $i$, listed in increasing order. 
\end{remark}

We illustrate these concepts with an example and show how to obtain a parking function from a labeled \Luka~path.

\begin{example}\label{ex:Luka to pf}
Consider the \Luka~path  \(\ell = (2,0,1,0,-1,0,-1,-1)\) illustrated in \Cref{fig:Lukapath2}, together with the ordered set partition  $\mathcal{B}= (\beta_1,\beta_2,\ldots, \beta_n)$ where
\[
\beta_1 = \{2,4,6\}, \ \beta_2 = \{1\}, \ \beta_3 = \{3,5\}, \ \beta_4 = \{8\}, \ \beta_5 = \emptyset, \ \beta_6 = \{7\}, \ \beta_7 = \emptyset, \ \beta_8 = \emptyset.
\]  

For each step $\ell_i$, the elements of $\beta_i$ are placed, in increasing order, along the intervals of the $x$-axis corresponding to that step, as shown in \Cref{fig:Lukapath2}.  

    \begin{figure}[ht]
    \centering
    \begin{tikzpicture}
           \draw[step=1cm,gray,very thin] (0,0) grid (8,3);
           \foreach \x in {0,...,3} {
        \draw (0.1,\x) -- (-0.1,\x) node[left] {\x};
    }
    \foreach \x in {0,...,8} {
    \draw (\x,0.1) -- (\x,-0.1) node[below] {\x};
    }
    \draw[ultra thick, red](0,0)--(3,2)--(4,2)--(6,3)--(7,3)--(7,2)--(8,2)--(8,1)--(8,0);
    \foreach \x/\y in {0/0, 3/2, 4/2, 6/3,7/3, 7/2,8/2, 8/1,8/0} {
        \filldraw[red] (\x,\y) circle (3pt);
    }
    \node at (1.5,1.5) {$\ell_1$};
    \node at (3.5,2.4) {$\ell_2$};
    \node at (5,2.9) {$\ell_3$};
    \node at (6.5,3.3) {$\ell_4$};
    \node at (6.8,2.5) {$\ell_5$};
    \node at (7.5,2.3) {$\ell_6$};
    \node at (8.3,1.5) {$\ell_7$};
    \node at (8.3,.5) {$\ell_8$};

\node at(.5,-.18){\tb{2}};
\node at(1.5,-.18){\tb{4}};
\node at(2.5,-.18){\tb{6}};
\node at(3.5,-.18){\tb{1}};
\node at(4.5,-.18){\tb{3}};
\node at(5.5,-.18){\tb{5}};
\node at(6.5,-.18){\tb{8}};
\node at(7.5,-.18){\tb{7}};

    \end{tikzpicture}
    \caption{The labeled \Luka~path $L=(\ell,\beta)$ described in \Cref{ex:Luka to pf}.}
    \label{fig:Lukapath2}
\end{figure}

The corresponding parking function is  \(\alpha = (2,1,3,1,3,1,6,4)\),
obtained by setting $\alpha_k = m$ whenever $k \in \beta_m$.  It is worth noting that $\beta_i$ is the list of cars preferring spot $i$ in $\alpha$.   Finally, the permutation $\alpha_L$ associated to $L$ from \Cref{def:labeled_Luka} is  \( \alpha_L = (2,4,6,1,3,5,8,7)\).
\end{example}

As the previous discussion has hinted, there is a one-to-one correspondence between labeled \L{}ukasiewicz~paths and parking functions. 
\begin{theorem}
\label{thm:labeled luka bijection to PFs}
    The set of labeled \Luka~paths of length $n$ is in bijection with parking functions of length $n$.
\end{theorem}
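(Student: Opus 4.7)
The plan is to exhibit explicit maps in both directions between labeled \Luka~paths of length $n$ and $\PF_n$, and verify they are mutual inverses. The forward direction is already described in the remark preceding the theorem: given $L = (\ell, \mathcal{B})$ with $\mathcal{B} = (\beta_1, \ldots, \beta_n)$, define $\alpha = \Phi(L) \in [n]^n$ by setting $\alpha_k = m$ whenever $k \in \beta_m$. Since $\mathcal{B}$ is an ordered set partition of $[n]$, each $k \in [n]$ belongs to exactly one block, so $\alpha$ is well-defined. The reverse direction $\Psi \colon \PF_n \to \{\text{labeled \Luka~paths of length } n\}$ is the natural candidate: given $\alpha \in \PF_n$, define $\beta_m \coloneqq \{k \in [n] : \alpha_k = m\}$ for $m \in [n]$, set $\ell_i \coloneqq |\beta_i| - 1$, and take $\Psi(\alpha) = (\ell, \mathcal{B})$.

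The first key step is to check that $\Phi(L)$ is actually a parking function. For each $i \in [n]$, the number of coordinates $k$ with $\alpha_k \leq i$ equals
\[
\sum_{j=1}^{i} |\beta_j| = \sum_{j=1}^{i} (\ell_j + 1) = i + \sum_{j=1}^{i} \ell_j \geq i,
\]
using the Łukasiewicz partial-sum condition from \Cref{def:luk_word}. This is exactly the parking condition. The second key step is to check that $\Psi(\alpha)$ is a labeled \Luka~path. The parts $\beta_m$ partition $[n]$ by definition, so $|\beta_i| = \ell_i + 1$ is automatic. Each $\ell_i \geq -1$ since $|\beta_i| \geq 0$. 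The partial sums satisfy
\[
\sum_{j=1}^{i} \ell_j = \Bigl(\sum_{j=1}^{i}|\beta_j|\Bigr) - i = |\{k : \alpha_k \leq i\}| - i \geq 0
\]
by the parking function definition, and $\sum_{j=1}^{n} \ell_j = n - n = 0$.

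Finally, I would verify $\Psi \circ \Phi = \mathrm{id}$ and $\Phi \circ \Psi = \mathrm{id}$ by unwinding definitions: starting from $L$, the $m$-th block of $\Psi(\Phi(L))$ is $\{k : \alpha_k = m\} = \{k : k \in \beta_m\} = \beta_m$, recovering the original labeling; and starting from $\alpha \in \PF_n$, the resulting parking function $\Phi(\Psi(\alpha))$ assigns $k \mapsto m$ precisely when $k \in \beta_m$, i.e.\ when $\alpha_k = m$, giving back $\alpha$.

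No step looks like a genuine obstacle: the entire argument amounts to translating the Łukasiewicz partial-sum inequalities into the parking function inequalities under the identification $|\beta_i| = \ell_i + 1$. The only point worth being careful about is the bookkeeping that the labels within each $\beta_i$ are listed in increasing order along the $x$-axis, but since $\beta_i$ is a set (not a sequence), this ordering is canonical and does not contribute any additional choices, so it does not affect the bijection count.
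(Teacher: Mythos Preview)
Your proof is correct, but it takes a genuinely different route from the paper. You construct the bijection between labeled \Luka~paths and parking functions directly, verifying that the partial-sum condition in \Cref{def:luk_word} translates exactly into the parking inequality under the identification $|\beta_i| = \ell_i + 1$. The paper instead factors the bijection through labeled Dyck paths: it sends $L = (\ell,\mathcal{B})$ to the Dyck path whose $i$th column has height $\ell_i+1$, with north-step labels given by $\beta_i$, and then invokes the known bijection between labeled Dyck paths and parking functions \cite{Schumacher}. Your argument is more self-contained and arguably cleaner, since it avoids the intermediate object and the appeal to an external result; the paper's detour is chosen deliberately (as they say just before the proof) to highlight the relationship between \Luka~paths and Dyck paths. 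At the level of content the two proofs coincide: the Dyck-path column heights are precisely your $|\beta_i|$, and the ``stays weakly above the diagonal'' condition for Dyck paths is the same partial-sum inequality you verify directly.
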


Instead of constructing this bijection directly, we prove \Cref{thm:labeled luka bijection to PFs} by first building a bijection between labeled \Luka~paths and labeled Dyck paths.
Since labeled Dyck paths are already known to be in bijection with parking functions \cite{Schumacher}, this will establish the theorem.
We take this route to emphasize the close relationship between \Luka~paths and Dyck paths.

Recall that a Dyck path of length $n$ is a lattice path from $(0,0)$ to $(n,n)$ with north steps ($\North$) and east steps ($\East$) that stays weakly above the diagonal $y=x$.
A \emph{labeled} Dyck path of length $n$ assigns the labels $[n]$ bijectively to the north steps, with each vertical run labeled in increasing order from bottom to top.
Given such a labeled Dyck path, the associated parking function $\alpha=(\alpha_1,\ldots,\alpha_n)$ is defined by setting $\alpha_i=j$ whenever label $i$ appears on a north step in column $j$ (columns are indexed from $1$).
For further details on this bijection, see \cite{Schumacher}.
We illustrate this correspondence in the following example.

\begin{example}
  Let $\alpha$ be the parking function corresponding to the labeled Dyck path given in \Cref{fig:Dyck}.
Reading by columns: the north-step labels in column~1 are $2,4,$ and $6$, so $\alpha_2=\alpha_4=\alpha_6=1$; 
column~2 has label $1$, so $\alpha_1=2$; 
column~3 has labels $3,5$, so $\alpha_3=\alpha_5=3$; 
column~4 has label $8$, so $\alpha_8=4$; 
and column~6 has label $7$, so $\alpha_7=6$.
Thus \(\alpha=(2,1,3,1,3,1,6,4).\)

  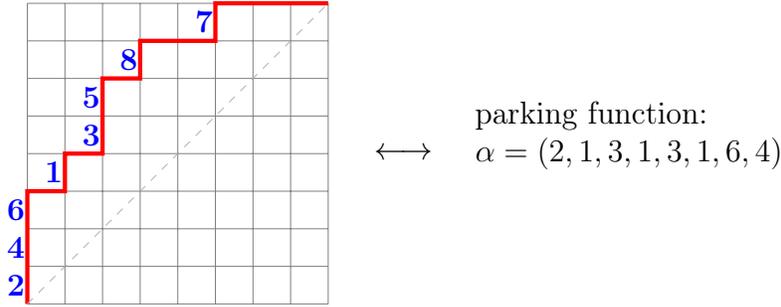
\begin{figure}[htbp]
        \centering
        \begin{tikzpicture}[scale=.5]
            \draw[step=1cm,gray,very thin] (0,0) grid (8,8);
    \draw[ultra thick, red](0,0)--(0,3)--(1,3)--(1,4)--(2,4)--(2,6)--(3,6)--(3,7)--(5,7)--(5,8)--(8,8);
\node at(-.3,.5){\tb{2}};
\node at(-.3,1.5){\tb{4}};
\node at(-.3,2.5){\tb{6}};
\node at(.7,3.5){\tb{1}};
\node at(1.7,4.5){\tb{3}};
\node at(1.7,5.5){\tb{5}};
\node at(2.7,6.5){\tb{8}};
\node at(4.7,7.5){\tb{7}};
\draw[gray!60,dashed](0,0)--(8,8);
\node at (15,5){parking function:};
\node at (16,4){$\alpha=(2,1,3,1,3,1,6,4)$};
\node at (10,4){$\longleftrightarrow$};
    \end{tikzpicture}
        \caption{A labeled Dyck path of length $8$ and its corresponding parking function.}
        \label{fig:Dyck}
    \end{figure}
For the reverse direction, given the parking function $\alpha=(2,1,3,1,3,1,6,4)$,  
we first group cars by preference.  
Notice that these sets form $\mathcal{B}$, which is part of the data of the labeled \Luka~path $L=(\ell,\mathcal{B})$ corresponding to $\alpha$ where
\[
\beta_1=\{2,4,6\},
\beta_2=\{1\},
\beta_3=\{3,5\},
\beta_4=\{8\},
\beta_5=\emptyset,
\beta_6=\{7\},
\beta_7=\emptyset,
\beta_8=\emptyset.
\]
  
The column heights of the corresponding Dyck path are determined by the sizes of these sets:
\(c = (3,1,2,1,0,1,0,0)\). Note that  
$c = \ell + (1,\ldots,1)$.  
We then construct the Dyck path by taking, for each column $j$, $c_j$ north steps followed by one east step, which yields
\(
\North\,\North\,\North\,\East\;
\North\,\East\;
\North\,\North\,\East\;
\North\,\East\;
\East\;
\North\,\East\;
\East\;
\East.
\)
Finally, label the north steps in each column from bottom to top with the elements of $\beta_j$ in increasing order. This reproduces the labeled Dyck path in \Cref{fig:Dyck}.
\end{example}

With this refresher in place, we are now ready to construct a bijection between labeled Dyck paths and labeled \Luka~paths.

\begin{proof}[Proof of \Cref{thm:labeled luka bijection to PFs}] 
Let $L=(\ell,\mathcal{B})$ be a labeled \Luka~path with  
$\ell = (\ell_1,\ell_2,\ldots,\ell_n)$ and $\mathcal{B} = (\beta_1,\beta_2,\ldots,\beta_n)$.  
We construct the corresponding Dyck path by interpreting $\ell_i+1$ as the height of column~$i$ as follows: for $i=1,2,\ldots,n$, take $\ell_i+1$ north steps ($\North$) followed by one east step ($\East$).

Here $\ell_i+1 = |\beta_i|$ is the number of north steps in column~$i$,  
which equals the number of cars in the corresponding parking function that prefer spot~$i$.  
The fact that $\ell$ never goes below the $x$-axis guarantees that the constructed path  
never goes below the main diagonal, so it is indeed a Dyck path.

This procedure is reversible: given a Dyck path, the column heights recover $\ell$,  
and the labels on the vertical runs recover $\mathcal{B}$.  
Since the labeling of north steps of a Dyck path assigns the cars in increasing order from bottom to top,  each element in $\beta_i$ is in increasing order.   
This completes the bijection.
\end{proof}

In the following example, we illustrate how to move between a labeled \Luka~path and labeled Dyck path, as described in the proof of \Cref{thm:labeled luka bijection to PFs}.

\begin{example}
    Consider the labeled \Luka~path of \Cref{ex:Luka to pf} presented in  \Cref{fig:Lukapath2} (also the left of \Cref{fig:side by side}) where $$L=((2,0,1,0,-1,0,-1,-1),(\{2,4,6\},\{1\},\{3,5\},\{8\},\emptyset,\{7\},\emptyset,\emptyset)).$$ 
    Using the steps given in the proof of \Cref{thm:labeled luka bijection to PFs},  we show that the Dyck path given in \Cref{fig:Dyck} corresponds to $L$. For the reader’s convenience, the two paths are presented side by side in \Cref{fig:side by side}. 

    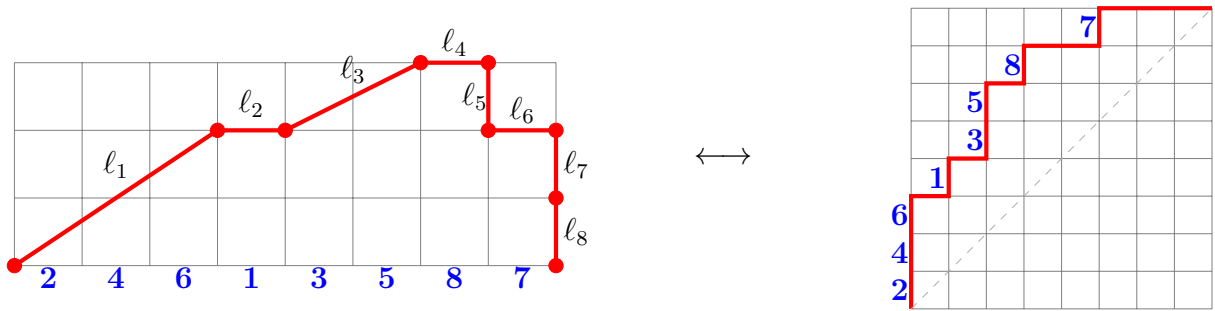
\begin{figure}[ht]
\centering
\begin{minipage}{0.48\textwidth}
    \centering
    \begin{tikzpicture}[scale=.9]
        \draw[step=1cm,gray,very thin] (0,0) grid (8,3);
        \draw[ultra thick, red](0,0)--(3,2)--(4,2)--(6,3)--(7,3)--(7,2)--(8,2)--(8,1)--(8,0);
        \foreach \x/\y in {0/0, 3/2, 4/2, 6/3,7/3, 7/2,8/2, 8/1,8/0} {
            \filldraw[red] (\x,\y) circle (3pt);
        }
        \node at (1.5,1.5) {$\ell_1$};
        \node at (3.5,2.4) {$\ell_2$};
        \node at (5,2.9) {$\ell_3$};
        \node at (6.5,3.3) {$\ell_4$};
        \node at (6.8,2.5) {$\ell_5$};
        \node at (7.5,2.3) {$\ell_6$};
        \node at (8.3,1.5) {$\ell_7$};
        \node at (8.3,.5) {$\ell_8$};
\node at(.5,-.18){\tb{2}};
\node at(1.5,-.18){\tb{4}};
\node at(2.5,-.18){\tb{6}};
\node at(3.5,-.18){\tb{1}};
\node at(4.5,-.18){\tb{3}};
\node at(5.5,-.18){\tb{5}};
\node at(6.5,-.18){\tb{8}};
\node at(7.5,-.18){\tb{7}};
    \end{tikzpicture}
\end{minipage}
\hfill
\begin{minipage}{0.48\textwidth}
    \centering
    \begin{tikzpicture}[scale=.5]
        \draw[step=1cm,gray,very thin] (0,0) grid (8,8);
        \draw[ultra thick, red](0,0)--(0,3)--(1,3)--(1,4)--(2,4)--(2,6)--(3,6)--(3,7)--(5,7)--(5,8)--(8,8);
\node at(-.3,.5){\tb{2}};
\node at(-.3,1.5){\tb{4}};
\node at(-.3,2.5){\tb{6}};
\node at(.7,3.5){\tb{1}};
\node at(1.7,4.5){\tb{3}};
\node at(1.7,5.5){\tb{5}};
\node at(2.7,6.5){\tb{8}};
\node at(4.7,7.5){\tb{7}};
        \draw[gray!60,dashed](0,0)--(8,8);
        \node at (-5,4){$\longleftrightarrow$};
    \end{tikzpicture}
\end{minipage}
\caption{Illustration of the bijection in \Cref{thm:labeled luka bijection to PFs}.}\label{fig:side by side}
\end{figure}

Since $\ell_1 = 2$, the Dyck path begins with three north steps followed by an east step $\North\North\North\East$.  
This is followed by $\North\East$  from $\ell_2 = 0$,  
then $\North\North\East$ from $\ell_3 = 1$.  
From $\ell_4 = 0$ we again have $\North\East$,  
while $\ell_5 = -1$ contributes a single east step $\East$.  
Next, $\ell_6 = 0$ yields $\North\East$,  
$\ell_7 = -1$ contributes another east step $\East$,  
and finally $\ell_8 = -1$ gives the last east step $\East$. Thus the Dyck path is $\North\,\North\,\North\,\East\;
\North\,\East\;
\North\,\North\,\East\;
\North\,\East\;
\East\;
\North\,\East\;
\East\;
\East$.  We now take the ordered set partition $(\{2,4,6\},\{1\},\{3,5\},\{8\},\emptyset,\{7\},\emptyset,\emptyset)$  
and use the elements of the $i$th block to label, in increasing order, the run of north steps in column $i$.  
This produces exactly the labeled Dyck path shown on the right of \Cref{fig:side by side}.
\end{example}

\begin{remark}
The labeled (prime) \Luka~path perspective provides an alternative derivation for the number of (prime) parking functions of length $n$. Let $S_n$ denote the set of permutations of $[n]$. Given any \Luka~path $\ell \in \LP_n$ and any permutation $\pi \in S_n$, labeling the $x$-axis under $\ell$ according to $\pi$ produces a parking function.  
However, different permutations in $S_n$ may yield the same parking function, since permuting the labels within a given step $\ell_i$ does not change the resulting parking function.
 Thus, the total number of parking functions is given by $$|\PF_n|=\sum_{\ell\in\LP_n}\frac{n!}{\prod\limits_{i=1}^{n}(\ell_i+1)!} .$$
Similarly, denoting the set of prime \Luka~paths by $\LP'_n,$ we have
    $$|\PPF_n|=\sum_{\ell\in\LP'_n}\frac{n!}{\prod\limits_{i=1}^{n}(\ell_i+1)!}. $$
\end{remark}

We now define the descent, ascent, and tie set of a labeled \Luka~path.

\begin{definition}\label{def:sets for Luka}
    Let $L=(\ell,\mathcal{B})$ be a labeled \Luka~path of length $n$  with 
    $\ell=(\ell_1,\ell_2,\ldots,\ell_n)$ and $\mathcal{B}=(\beta_1,\beta_2,\ldots,\beta_n)$.
    We define the descent, ascent, and tie set of $L$ as follows:
    \begin{align*}
        \Tie(L)&=\{i\in[n-1]:i,i+1\in\beta_j\mbox{ for some $j$}\},\\
        \Des(L)&=\{i\in[n-1]:\mbox{$i\in \beta_j$ and $i+1\in \beta_m$ with $m<j$}\},\\
        \Asc(L)&=\{i\in[n-1]:\mbox{$i\in \beta_j$ and $i+1\in \beta_m$ with $m>j$}\}.
    \end{align*}
\end{definition}

We illustrate \Cref{def:sets for Luka} in the following example.

\begin{example} 
Consider the labeled \Luka~path 
$$L=((2,0,1,0,-1,0,-1,-1),(\{2,4,6\},\{1\},\{3,5\},\{8\},\emptyset,\{7\},\emptyset,\emptyset)),$$
which is illustrated in \Cref{fig:Lukapath2}. 
The descent, ascent and tie sets of $L$ are as follows:
\begin{align*}
    \Des(L)&=\{1,3,5,7\},\\
    \Asc(L)&=\{2,4,6\},\\
    \Tie(L)&=\emptyset.
\end{align*}

Notice that these sets agree with the  descent, ascent, and tie set of the corresponding parking function $\alpha=(2,1,3,1,3,1,6,4)$ of $L$.
\end{example}

The observation made at the end of this example holds in general: the descent set of a labeled \Luka~path coincides with the descent set of the corresponding parking function. First recall from \Cref{def:labeled_Luka} that, for a labeled \Luka~path $L=(\ell,\mathcal{B})$ of length $n$,  $\alpha_L$ is the permutation arising from ordering the elements of each $\beta_j$ in increasing order for $j \in [n]$.

\begin{proposition}\label{lem:descents agree}
The descent set of the parking function corresponding to a labeled \Luka~path $L=(\ell, \mathcal{B})$ is given by the inverse descent set of $\alpha_L$ (descent set of $\alpha_L^{-1}$). 
\end{proposition}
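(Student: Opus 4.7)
The plan is to prove $\Des(\alpha) = \Des(\alpha_L^{-1})$ by a direct comparison, pivoting on which block of $\mathcal{B}$ each index belongs to. For convenience, for any $k \in [n]$ let $j(k)$ denote the unique index such that $k \in \beta_{j(k)}$, so that the associated parking function is simply $\alpha_k = j(k)$. Then
\[k \in \Des(\alpha) \iff \alpha_k > \alpha_{k+1} \iff j(k) > j(k+1).\]

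Next I would unpack $\alpha_L$. By construction, $\alpha_L$ is the concatenation of the sequences obtained by listing each $\beta_j$ in increasing order, so the position of $k$ in $\alpha_L$ is $\sum_{i<j(k)}|\beta_i|$ plus the rank of $k$ within its own block. The key observation is that
\[k \in \Des(\alpha_L^{-1}) \iff \alpha_L^{-1}(k) > \alpha_L^{-1}(k+1) \iff k \text{ appears to the right of } k+1 \text{ in } \alpha_L.\]

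I would then conclude with a quick case split on the relative block membership of $k$ and $k+1$. If $j(k) = j(k+1)$, the two elements lie in the same block and are written in increasing order, so $k$ appears before $k+1$ and there is no descent at position $k$ on either side. If $j(k) \neq j(k+1)$, the block indices alone determine the ordering in $\alpha_L$, and $k$ appears after $k+1$ precisely when $j(k) > j(k+1)$. In both cases the two equivalences displayed above match, giving the desired equality of descent sets.

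The main subtlety is not mathematical but notational: one must remember that a descent of $\sigma^{-1}$ at position $k$ records the relative \emph{positions} in $\sigma$ of the \emph{values} $k$ and $k+1$, rather than a comparison of $\sigma(k)$ and $\sigma(k+1)$. Once this is made explicit, the argument reduces to the two-case comparison above, and no additional machinery is required. As a sanity check, I would verify against the worked example (where $\alpha = (2,1,3,1,3,1,6,4)$ and $\alpha_L = (2,4,6,1,3,5,8,7)$ both yield descent set $\{1,3,5,7\}$) to confirm the directionality of the inverse.
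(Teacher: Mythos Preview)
Your proposal is correct and follows essentially the same approach as the paper's proof: both arguments identify $\alpha_k$ with the block index $j(k)$, translate a descent of $\alpha_L^{-1}$ at $k$ into the statement that $k$ appears to the right of $k+1$ in $\alpha_L$, and then use the increasing-within-block ordering of $\alpha_L$ to equate this with $j(k)>j(k+1)$. Your explicit case split on whether $j(k)=j(k+1)$ is a minor elaboration that the paper folds into a single sentence, but the logic is identical.
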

\begin{proof}
Let $\alpha$ be the parking function corresponding to $L$.  
By definition, $\alpha_k = m$ if and only if $k \in \beta_m$.  
Thus, $\alpha$ has a descent at $i$ exactly when $i \in \beta_j$ and $i+1 \in \beta_m$ for some $m < j$, which is precisely the condition for $L$ to have a descent at $i$.  

Since the elements of each $\beta_j$ are listed in increasing order, the condition $i \in \beta_j$ and $i+1 \in \beta_m$ with $m < j$ is equivalent to $i+1$ appearing before $i$ in $\alpha_L$.  
That is, $\alpha_L^{-1}(i) > \alpha_L^{-1}(i+1)$, meaning $i$ is an inverse descent of $\alpha_L$.  
\end{proof}

\begin{remark}
As with descent sets, the ascent set of a labeled \Luka~path and its corresponding parking function coincide. However, while in \Cref{lem:descents agree} we recover descents via $\alpha_L^{-1}$, both ascents and ties in $\alpha_L$ contribute to inverse ascents:
\[
\Asc(\alpha_L^{-1})=\Asc(\alpha)\,\cup\,\Tie(\alpha).
\] 
Take, for example, $\alpha_L=(1,2,6,4,3,5,8,7).$ Here $\Des(L)=\{3,5,7\},$ $\Asc(L)=\{2,4,6\},$ and $\Tie(L)=\{1\},$ while ascent set of $\alpha_L^{-1}=\{1, 2, 4, 6\}.$
\end{remark}

We conclude this section by noting that all of these results generalize  to prime parking functions via prime \Luka~paths.

\section{$\ell$-forward differences} \label{sec:l-forward diff}
Denote the set of prime parking functions of length $n$ by $\PPF_n$. We use the circular rotation construction, due to Kalikow \cite[pp.~141-142]{Stanley}, to choose a prime parking function uniformly at random.

\begin{enumerate}
\item Pick at random an element $\pi_0 \in (\mathbb{Z}/(n-1)\mathbb{Z})^n$, where the equivalence class representatives are taken in $1,2, \dots, n-1$.
\item Let $(1,1, \dots,
  1)$ be a vector of length $n$.  There is exactly one value of $i \in \{0,1, \dots, n-2\}$, such that $\pi=\pi_0+i(1,1, \dots,
  1)$ (modulo $n-1$) is a prime parking function.
\end{enumerate}
Thus, a random prime parking function can be generated by assigning $n$ cars' preferences independently on a
circle of length $n-1$ and then applying circular rotation.  As an example, for $n=4$ take $\pi_0=(2,3,3,2).$  The unique parking function of the form $\pi=\pi_0+i(1,1,1,1)$ is given by $(1,2,2,1)=(2,3,3,2)+2(1,1,1,1) \bmod{3}.$ For applications of this circular rotation perspective on parking functions, see Stanley and Yin \cite{StanleyYin}.

Given a tuple $\x=(x_1,x_2,\ldots,x_n)$, we say $\x$ has an {\bf $\ell$-forward difference} at $i$ if $x_{i+1}-x_{i}=\ell\pmod{n-1}$, for $i=1,2,\ldots, n-1$ and $\ell=0,1,\ldots, n-2$.  
Note, the $0$-forward differences are the ties of the parking function.  
We denote the number of $\ell$-forward differences in a parking function $\pi$ by $\fd(\pi).$ 

\PPFRotationCount*

\begin{proof}
We employ the circular rotation construction described above. For $1\leq i\leq n-1$, note that if the
$(i+1)$th car prefers the spot that is (clockwise) $\ell$-distance away from that of the preference of the $i$th car, which happens with probability $1/(n-1)$, then the count for $\ell$-distance adds 1. Otherwise, with probability $(n-2)/(n-1)$, the count stays the same.

Since the number of prime parking functions of length $n$ is
$(n-1)^{n-1}$, it follows that the number of prime parking functions of length $n$ with exactly $k$ $\ell$-forward differences is $a_k=(n-1)^{n-1}\binom{n-1}{k}\left(\frac{1}{n-1}\right)^k\left(\frac{n-2}{n-1}\right)^{n-1-k}.$  Thus, 
\begin{align*}
\sum_{\pi \in \PPF_n} q^{\fd(\pi)}&=\sum_{k=0}^{n-1}a_kq^k=(n-1)^{n-1} \left(q\frac{1}{n-1}+\frac{n-2}{n-1}\right)^{n-1} 
=(q+n-2)^{n-1}.\qedhere
\end{align*}
\end{proof}

Using this generating function for the total number of $\ell$-forward differences in prime parking functions of length $n$, we can calculate the expected number of descents $\E[\des(\pi)|\pi\in\PPF_n].$ 

\begin{corollary}
    Take any 
    integer $\ell \in [0, n-2]$. Then we have $$\E[\fd(\pi)|\pi\in \PPF_n]=1.$$ Moreover, $$\E[{\des}(\pi)|\pi\in\PPF_n]=\E[\asc(\pi)|\pi\in\PPF_n]=\frac{n-2}{2}.$$
\end{corollary}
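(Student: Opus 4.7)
The plan is to read both claims directly out of Proposition~\ref{prop:q_l-des}, combined with one symmetry argument. First, to obtain $\E[\fd(\pi)\mid \pi\in\PPF_n]=1$, I would observe that the generating function
\[
\sum_{\pi \in \PPF_n} q^{\fd(\pi)}=(q+n-2)^{n-1}
\]
is, up to normalization by $|\PPF_n|=(n-1)^{n-1}$, the probability generating function of $\fd$. So I would differentiate with respect to $q$, evaluate at $q=1$, and divide by $(n-1)^{n-1}$:
\[
\E[\fd(\pi)\mid\pi\in\PPF_n]=\frac{(n-1)(q+n-2)^{n-2}\big|_{q=1}}{(n-1)^{n-1}}=\frac{(n-1)(n-1)^{n-2}}{(n-1)^{n-1}}=1.
\]
Since the right-hand side does not depend on $\ell$, this holds for every $\ell \in [0,n-2]$, as the statement requires.

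Next, for the expected number of ascents and descents, I would first specialize $\ell=0$ to get $\E[\tie(\pi)\mid \pi\in \PPF_n]=1$, using that ties are exactly $0$-forward differences. Combined with the identity $\des(\pi)+\asc(\pi)+\tie(\pi)=n-1$, linearity of expectation yields
\[
\E[\des(\pi)\mid \pi\in\PPF_n]+\E[\asc(\pi)\mid \pi\in\PPF_n]=n-2.
\]

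The last step is to produce a bijection on $\PPF_n$ exchanging ascents and descents. The natural choice is the reversal involution $\pi\mapsto\pi^{\mathrm{rev}}$ with $\pi^{\mathrm{rev}}_i=\pi_{n+1-i}$. Since $\PPF_n$ is invariant under permutations of the coordinates, $\pi^{\mathrm{rev}}\in \PPF_n$; and the defining inequality $\pi_i<\pi_{i+1}$ at position $i$ in $\pi$ becomes $\pi^{\mathrm{rev}}_{n-i}>\pi^{\mathrm{rev}}_{n-i+1}$ at position $n-i$ in $\pi^{\mathrm{rev}}$, so $\asc(\pi)=\des(\pi^{\mathrm{rev}})$. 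Summing over $\PPF_n$ gives $\sum_\pi \asc(\pi)=\sum_\pi \des(\pi)$, hence $\E[\asc]=\E[\des]=(n-2)/2$.

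There is essentially no obstacle: the main input (the generating function $(q+n-2)^{n-1}$) does all the work, and the only non-formal step is the reversal symmetry, which is immediate from the permutation-invariance of $\PPF_n$ established at the start of Section~\ref{sec:expected value results}. No new calculation beyond differentiating a polynomial is required.
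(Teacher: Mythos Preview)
Your proposal is correct and follows essentially the same approach as the paper: differentiate the generating function from Proposition~\ref{prop:q_l-des} and evaluate at $q=1$ for the first claim, then use the reversal involution $\pi\mapsto\hat\pi$ to equate $\E[\asc]$ and $\E[\des]$. The only cosmetic difference is that you reach $\E[\des]+\E[\asc]=n-2$ via the pointwise identity $\des+\asc+\tie=n-1$ together with $\E[\tie]=1$, whereas the paper obtains it by summing $\E[\fd]=1$ over all $\ell\neq 0$; these are two phrasings of the same observation.
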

\begin{proof}
  Recall that  $$\E[\fd(\pi)|\pi\in \PPF_n]= \frac{\sum_{\pi \in \PPF_n} \fd(\pi)}{|\PPF_n|}.$$
  We can use \Cref{prop:q_l-des} to calculate the expected number of $\ell$-forward differences. Differentiating both sides of
    \begin{equation*}
        \sum_{\pi\in\PPF_n}q^{\fd(\pi)}=(q+n-2)^{n-1},
    \end{equation*} we have $$\sum_{\pi\in\PPF_n}\fd(\pi)q^{(\fd(\pi)) -1}=(n-1)(q+n-2)^{n-2}.$$ Evaluating at $q=1,$ 
    $$\sum_{\pi\in\PPF_n}\fd(\pi)=(n-1)^{n-1}.$$  
    Dividing by the total number of prime parking functions, we find $\E[\fd(\pi)|\pi\in \PPF_n]=1$. 

Given $\pi=(\pi_1,\pi_2,\ldots,\pi_n)\in \PPF_n$, each $(\pi_i,\pi_{i+1})$ pair corresponds to an $\ell$-forward difference for some value of $\ell$.  When $\ell=0$, we have $(\pi_i,\pi_{i+1})$ is a tie.  When $\ell\not=0$, $(\pi_i,\pi_{i+1})$ is either an ascent or a descent.  Let $\hat{\pi}\in \PPF_n$ be the reverse parking function, $\hat{\pi}=(\pi_n,\ldots, \pi_2,\pi_1).$  Each ascent of $\pi$ corresponds to a descent of $\hat{\pi}$, and similarly, each descent to an ascent.  Note that in the case $\pi=\hat{\pi},$ $\des(\pi)=\asc(\pi).$  Thus $$\sum_{\ell\not=0}\sum_{\pi\in\PPF_n}\fd(\pi)=\sum_{\pi\in\PPF_n}\des(\pi)+\sum_{\pi\in\PPF_n}\asc(\pi)=2\sum_{\pi\in\PPF_n}\des(\pi),$$ and \[\E[\des(\pi)|\pi\in\PPF_n]=\frac{\sum_{\pi\in\PPF_n}\des(\pi)}{|\PPF_n|}=\frac{1}{2}\frac{\sum_{\ell=1}^{n-2}(n-1)^{n-1}}{(n-1)^{n-1}}=\frac{n-2}{2}.\qedhere\]        
\end{proof}

\section{Tie sets and quasisymmetric functions}\label{sec: TieSets QuasiSymm}

In this section, we prove a link between sums of Gessel's quasisymmetric functions indexed by tie sets and sums of Schur functions.  Let $\SetL{\ell}(\pi)$ denote the set of indices where the prime parking function $\pi$ has an $\ell$-forward difference.  Thus $\SetL{0}(\pi)=\Tie(\pi).$  We start with the following count of prime parking functions with a given $\ell$-forward difference set, which is used in proving the main result of this section.

\begin{theorem}\label{thm:Tie_Set_count}
    Given a set $S\subseteq[n-1]$, and fixed $0\leq \ell\leq n-2,$ the number of prime parking functions $\pi\in\PPF_n$ with $\SetL{\ell}(\pi)=S$ is  
    $$|\{\pi \in \PPF_n : \SetL{\ell}(\pi)=S\}| =(n-2)^{n-1-|S|}.$$
\end{theorem}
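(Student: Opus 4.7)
The plan is to leverage Kalikow's circular rotation construction (already recalled immediately before \Cref{prop:q_l-des}) in a more refined way. Rather than just tracking the total number of $\ell$-forward differences, I would track exactly which positions realize them. The key observation is that the consecutive differences $\pi_{i+1}-\pi_i \pmod{n-1}$, and hence the set $\SetL{\ell}(\pi)$, depend only on the lift $\pi_0\in(\mathbb{Z}/(n-1)\mathbb{Z})^n$ modulo the shift action $\pi_0 \mapsto \pi_0 + (1,1,\ldots,1) \pmod{n-1}$, since that shift adds a constant and therefore cancels in every consecutive difference.

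First I would count the lifts directly. To build a $\pi_0 \in (\mathbb{Z}/(n-1)\mathbb{Z})^n$ with $\SetL{\ell}(\pi_0)=S$, freely choose $(\pi_0)_1$ (giving $n-1$ choices), then for each $i\in[n-1]$ determine $(\pi_0)_{i+1}$ from $(\pi_0)_i$: if $i\in S$ the successor is forced to equal $(\pi_0)_i+\ell \pmod{n-1}$ (one choice), while if $i\notin S$ it may take any of the other $n-2$ residues. This yields
\[
\bigl|\{\pi_0 \in (\mathbb{Z}/(n-1)\mathbb{Z})^n : \SetL{\ell}(\pi_0)=S\}\bigr| = (n-1)(n-2)^{n-1-|S|}.
\]

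Next I would descend from lifts to prime parking functions. By Kalikow's construction, for every lift $\pi_0$ there is a unique $i\in\{0,1,\ldots,n-2\}$ such that $\pi_0+i(1,\ldots,1) \pmod{n-1}$ lies in $\PPF_n$; equivalently, the map from $(\mathbb{Z}/(n-1)\mathbb{Z})^n$ to $\PPF_n$ defined by the construction is $(n-1)$-to-$1$, with each fiber being a single shift-orbit of size $n-1$. Since $\SetL{\ell}$ is shift-invariant by the key observation, the entire orbit shares a common forward-difference set. Dividing the lift count by $n-1$ gives
\[
|\{\pi \in \PPF_n : \SetL{\ell}(\pi)=S\}| = (n-2)^{n-1-|S|},
\]
which is the desired formula.

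I do not expect a genuine obstacle here: the argument is essentially a refinement of the proof of \Cref{prop:q_l-des}, but tracking the location rather than just the count of forward differences. The one point requiring a little care is confirming that the forward-difference set really is constant along each shift-orbit (immediate, since constants cancel in differences) and that the surjection from lifts to prime parking functions is genuinely uniformly $(n-1)$-to-$1$ (this is exactly Kalikow's statement). As a sanity check, summing $(n-2)^{n-1-|S|}\binom{n-1}{|S|}$ over $S\subseteq[n-1]$ recovers $(1+(n-2))^{n-1}=(n-1)^{n-1}=|\PPF_n|$, consistent with \Cref{prop:q_l-des} at $q=1$.
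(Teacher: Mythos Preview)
Your proposal is correct and is essentially the same argument as the paper's. The paper packages the idea as a bijection $\mL:\PPF_n\to(\mathbb{Z}/(n-1)\mathbb{Z})^{n-1}$ sending $\pi$ to its sequence of consecutive differences $(\pi_{i+1}-\pi_i)\pmod{n-1}$, then counts directly in the target; your count-lifts-and-divide-by-orbit-size formulation is the same mechanism viewed from the shift-orbit side rather than through the difference-map section.
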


\begin{proof} 
Consider the mapping $\mL:\PPF_n\rightarrow \left(\mathbb{Z}/(n-1)\mathbb{Z}\right)^{n-1}$ given by 
\[\mL(\pi)=(\mL_1,\mL_2,\ldots,\mL_{n-1})=(\pi_2-\pi_1, \pi_3-\pi_2,\ldots,\pi_n-\pi_{n-1})\pmod{n-1},\] i.e., $\mL_i$ is the $\ell$-forward difference of $\pi$ at $i.$ A consequence of Kalikow's construction is that this mapping is a bijection.  
Given an arbitrary $\mathcal{L}\in \left(\mathbb{Z}/(n-1)\mathbb{Z}\right)^{n-1},$ let \[\pi_0=(1,1+\mL_1,1+\mL_1+\mL_2,\ldots,1+\sum_{j=1}^{n-1}\mL_j)\pmod{n-1}.\]  The prime parking function $\pi$ of the form $\pi_0+i(1,1,\ldots,1)\pmod{n-1}$ is the unique element of $\PPF_n$ with $\mathcal{L}(\pi)=\mL.$

Given a set $S\subseteq [n-1],$ the set of $\pi\in\PPF_n$ with $\SetL{\ell}(\pi)=S$ is precisely the set of prime parking functions that satisfy $\mL(\pi)_i=\ell$ for all $i\in S$ and $\mL(\pi)_i\not=\ell$ for all $i\not\in S.$  As there are exactly $(n-2)^{n-1-|S|}$ such $\mL\in \left(\mathbb{Z}/(n-1)\mathbb{Z}\right)^{n-1},$ it follows that \[|\{\pi \in \PPF_n : \SetL{\ell}(\pi)=S\}| =(n-2)^{n-1-|S|}.\qedhere\]
\end{proof}

We are now ready to prove the following result on the sum, over all prime parking functions of length $n,$ of quasisymmetric functions indexed by tie sets. The corresponding formula in the context of (classical) parking functions was derived earlier by Celano in \cite[Theorem 3.1]{Celano:SchurPositivity}.

\begin{theorem}\label{thm:quasisym}
For $n\geq 1$,
\begin{equation*}
\sum_{\pi\in\PPF_n}F_{n,\Tie(\pi)} =\sum_{i=1}^n (n-2)^{i-1}s_{(i,1^{n-i})}.
\end{equation*}
\end{theorem}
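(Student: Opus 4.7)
The plan is to rewrite the left-hand side by grouping prime parking functions according to their tie set, and then to invoke the standard fundamental-quasisymmetric expansion of hook Schur functions to match coefficients on both sides.

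First I would apply \Cref{thm:Tie_Set_count} with $\ell = 0$, which identifies $\SetL{0}(\pi) = \Tie(\pi)$ and gives
\[
|\{\pi \in \PPF_n : \Tie(\pi) = S\}| = (n-2)^{n-1-|S|}
\]
for every $S \subseteq [n-1]$. Grouping by the value of $\Tie(\pi)$ and then by the cardinality $|S| = n-i$, the left-hand side becomes
\[
\sum_{\pi \in \PPF_n} F_{n,\Tie(\pi)} = \sum_{S \subseteq [n-1]} (n-2)^{n-1-|S|} F_{n,S} = \sum_{i=1}^{n} (n-2)^{i-1} \sum_{\substack{S \subseteq [n-1] \\ |S| = n-i}} F_{n,S}.
\]

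Next I would establish the hook identity
\[
s_{(i,1^{n-i})} = \sum_{\substack{S \subseteq [n-1] \\ |S| = n-i}} F_{n,S}.
\]
This follows from Gessel's formula $s_\lambda = \sum_{T \in \mathrm{SYT}(\lambda)} F_{n,\Des(T)}$ together with a direct analysis of standard Young tableaux of hook shape $(i,1^{n-i})$. Such a tableau is determined by the subset $C \subseteq \{2, \dots, n\}$ of size $n-i$ placed in the first column below the entry $1$, with the complementary entries filling the first row in increasing order. A quick case check on the relative positions of $j$ and $j+1$ shows that $j$ is a descent of $T$ precisely when $j+1 \in C$, so the map $T \mapsto \Des(T) = \{c-1 : c \in C\}$ is a bijection between $\mathrm{SYT}((i,1^{n-i}))$ and the collection of subsets of $[n-1]$ of size $n-i$.

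Combining these two observations yields
\[
\sum_{\pi \in \PPF_n} F_{n,\Tie(\pi)} = \sum_{i=1}^{n} (n-2)^{i-1} s_{(i,1^{n-i})},
\]
as desired. I do not anticipate a substantive obstacle: the only non-routine ingredient is the descent analysis of hook tableaux, which is a standard argument and can be stated compactly. The enumeration from \Cref{thm:Tie_Set_count} carries the rest of the work, so the proof is essentially a matching of coefficients in the basis $\{F_{n,S}\}$.
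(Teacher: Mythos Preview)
Your proposal is correct and follows essentially the same route as the paper: both arguments reduce the identity to matching coefficients in the basis $\{F_{n,S}\}$ by combining the count from \Cref{thm:Tie_Set_count} with the hook expansion $s_{(i,1^{n-i})}=\sum_{|S|=n-i}F_{n,S}$. The only cosmetic differences are that the paper starts from the Schur side and cites the hook expansion as a known consequence of Stanley's $P$-partition theory, whereas you start from the parking-function side and supply a self-contained descent analysis of hook tableaux.
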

\noindent Here $F_{n,S}$ is the Gessel's quasisymmetric function defined as
\[F_{n,S}=
\sum_{\substack{1\leq b_1\leq \cdots\leq b_n \\ \\i\in S\Rightarrow b_i<b_{i+1}}} x_{b_1}\cdots x_{b_n},\]
where $S\subseteq[n-1],$ and $s_{(i,1^{n-i})}$ is the Schur function corresponding to the partition $(i,1^{n-i})$. 

We first illustrate this result with an example and then present the proof.

\begin{example}
When $n=3$, $\PPF_3=\{111,112,121,211\}$ and we have 
\begin{align}
    &\Tie(111)=\{1,2\}\rightarrow F_{3,\{1,2\}}(x_1, x_2, x_3)=x_1x_2x_3\label{tie12}\\
     &\Tie(121)=\emptyset\rightarrow \notag \\ &\hspace{1cm} F_{3,\emptyset}(x_1, x_2, x_3)=x_1^3+x_2^3+x_3^3+x_1^2x_2+x_1x_2^2+x_1^2x_3+x_1x_3^2+x_2^2x_3+x_2x_3^2+x_1x_2x_3\label{tiempty}\\
    &\Tie(112)=\{1\}\rightarrow F_{3,\{1\}}(x_1, x_2, x_3)=x_1x_2^2+x_1x_3^2+x_2x_3^2+x_1x_2x_3\label{tie1}\\
    &\Tie(211)=\{2\}\rightarrow F_{3,\{2\}}(x_1, x_2, x_3)=x_1^2x_2+x_1^2x_3+x_2^2x_3+x_1x_2x_3.\label{tie2}
\end{align}
Taking the sum of \eqref{tie12}-\eqref{tie2} yields
\begin{multline}
\label{eqLHS}
\sum_{\pi\in\PPF_n}F_{n,\Tie(\pi)} (x_1, x_2, x_3)
\\
\hspace{-2cm}=x_1^3+x_2^3+x_3^3+2(x_1^2x_2+x_1x_2^2+x_1^2x_3+x_1x_3^2+x_2^2x_3+x_2x_3^2)+4x_1x_2x_3.
\end{multline}

Now for the Schur functions side of the equation in \Cref{thm:quasisym}:
\[s_{(1,1^2)}+s_{(2,1)}+s_{(3)}.\]
We want to construct all of the fillings  which have numbers weakly increasing along the rows and strictly increasing down the columns of the Young diagram.
For $s_{(2,1)}(x_1, x_2, x_3)$ we have the following fillings:
\[\begin{ytableau}
       1 &  1  \\
  2\\
\end{ytableau},~\begin{ytableau}
       1 &  1  \\
  3\\
  \end{ytableau},~\begin{ytableau}
       1 &  2 \\
  2\\
  \end{ytableau},~\begin{ytableau}
       1 &  2  \\
  3\\
\end{ytableau},~\begin{ytableau}
       1 &  3 \\
  2\\
\end{ytableau}
,~
\begin{ytableau}
       1 &  3\\
3\\
\end{ytableau},~
\begin{ytableau}
       2 &  2  \\
  3\\
\end{ytableau},~
\begin{ytableau}
       2 &  3  \\
  3\\
\end{ytableau}.\]
Now for each filling we construct a monomial, and this yields:
\begin{align}\label{s21}
s_{(2,1)}(x_1, x_2, x_3)=x_1^2x_2+x_1^2x_3+x_1x_2^2+2x_1x_2x_3+x_1x_3^2+x_2^2x_3+x_2x_3^2.
\end{align}
Similarly, we can construct
$s_{(1^3)}(x_1, x_2, x_3)$ from the filling
\[\begin{ytableau}
       1  \\
  2\\
  3\\
\end{ytableau}\]
and so 
\begin{align}\label{s111}
s_{(1^3)}(x_1, x_2, x_3)&=x_1x_2x_3.
\end{align}
Lastly, $s_{(3)}(x_1, x_2, x_3)$ arises from the Young tableau: 
\[\begin{ytableau}
       1 &  1 &1
\end{ytableau}, ~\begin{ytableau}
       1 &  1 &2
\end{ytableau},~
\begin{ytableau}
       1 &  1 &3
\end{ytableau},~\begin{ytableau}
       1 &  2 &2
\end{ytableau}, ~\begin{ytableau}
       1 &  2 &3
\end{ytableau},\]
\[\begin{ytableau}
       1 &  3 &3
\end{ytableau}, ~\begin{ytableau}
       2 &  2 &2
\end{ytableau}, ~\begin{ytableau}
       2 &  2 &3
\end{ytableau}, ~\begin{ytableau}
       2 &  3 &3
\end{ytableau},~
\begin{ytableau}
       3 &  3 &3
\end{ytableau}.\]
Hence 
\begin{align}\label{s3}
s_{(3)}(x_1, x_2, x_3)&=x_1^3+x_1^2x_2+x_1^2x_3+x_1x_2^2+x_1x_2x_3+x_1x_3^2+x_2^3+x_2^2x_3+x_2x_3^2+x_3^3.\end{align}
Summing \eqref{s21}, \eqref{s111}, and \eqref{s3} yields the right-hand side of the equation in \Cref{thm:quasisym}:
\begin{multline}
    \label{eqRHS}
    \sum_{i=1}^{3}(3-2)^{i-1}s_{(i,1^{n-i})}(x_1, x_2, x_3) \\ =x_1^3+x_2^3+x_3^3+2(x_1^2x_2+x_1x_2^2+x_1^2x_3+x_1x_3^2+x_2^2x_3+x_2x_3^2)+4x_1x_2x_3.
\end{multline}
    Now note that the sum in \eqref{eqRHS} agrees with the sum in \eqref{eqLHS}. This confirms \Cref{thm:quasisym} for $n=3$.
\end{example}

\begin{proof}[Proof of \Cref{thm:quasisym}]
    The expansion of a Schur function into fundamental quasisymmetric functions is a
well-known consequence of Stanley's theory of $P$-partitions \cite[Theorem 7.19.7, p.~397]{Stanley}.
    It is known that
$$s_{(i,1^{n-i})}=\sum_{\substack{S\subseteq [n-1], |S|=n-i}}F_{n,S}.$$
So, we have 
\begin{equation}
\label{eq:schur}
\sum_{i=1}^n (n-2)^{i-1}s_{i,1^{n-i}}= \sum_{i=1}^n (n-2)^{i-1} \sum_{S\subseteq [n-1], |S|=n-i}F_{n,S}.
\end{equation}
By change of variables $j=n-i$, we can rewrite \eqref{eq:schur} as follows:
\begin{align*}
    \sum_{j=0}^{n-1} (n-2)^{n-j-1} \sum_{S\subseteq [n-1], |S|=j} F_{n,S}
    =&\sum_{j=0}^{n-1}  \sum_{S\subseteq [n-1], |S|=j}  F_{n,S} ~|\pi \in \PPF_n : \Tie(\pi)=S|\\
    =&\sum_{\pi \in \PPF_n} F_{n,\Tie(\pi)},
\end{align*}
where the first equality follows by \Cref{thm:Tie_Set_count}.
\end{proof}

The following results are generalizations of \Cref{prop:q_l-des}, \Cref{{thm:Tie_Set_count}}, and \Cref{thm:quasisym}.

\begin{proposition}
Let $m \neq \ell$. Then we have
 $$ \sum_{\pi \in \PPF_n} q^{
\fd(\pi)} t^{\fdm(\pi)} = (q+t+n-3)^{n-1}.$$
\end{proposition}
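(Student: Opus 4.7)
The plan is to adapt the proof of \Cref{prop:q_l-des} by running Kalikow's circular rotation construction and marking both the value $\ell$ and the value $m$ simultaneously. Recall from the proof of \Cref{thm:Tie_Set_count} that the map $\mL : \PPF_n \to (\mathbb{Z}/(n-1)\mathbb{Z})^{n-1}$ sending $\pi$ to the tuple of its consecutive differences $\mL_i = \pi_{i+1} - \pi_i \pmod{n-1}$ is a bijection. As a consequence, under the uniform distribution on $\PPF_n$, the coordinates $\mL_1, \mL_2, \ldots, \mL_{n-1}$ are mutually independent and each is uniform on $\mathbb{Z}/(n-1)\mathbb{Z}$.

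Since $m \neq \ell$, at each index $i \in [n-1]$ the three events $\{\mL_i = \ell\}$, $\{\mL_i = m\}$, and $\{\mL_i \notin \{\ell, m\}\}$ are mutually exclusive and exhaust all possibilities, with probabilities $\tfrac{1}{n-1}$, $\tfrac{1}{n-1}$, and $\tfrac{n-3}{n-1}$ respectively. The first contributes a factor of $q$ to $q^{\fd(\pi)} t^{\fdm(\pi)}$, the second a factor of $t$, and the third a factor of $1$. By independence across the $n-1$ positions and multiplication by $|\PPF_n| = (n-1)^{n-1}$, we obtain
\[
\sum_{\pi \in \PPF_n} q^{\fd(\pi)} t^{\fdm(\pi)} = (n-1)^{n-1} \left( \frac{q + t + n - 3}{n-1} \right)^{n-1} = (q + t + n - 3)^{n-1},
\]
as claimed.

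The main obstacle is essentially nonexistent once the bijective picture from \Cref{thm:Tie_Set_count} is in hand; the argument is a routine bivariate extension of the binomial-style computation in the proof of \Cref{prop:q_l-des}. The one point worth verifying is that the hypothesis $\ell \neq m$ is precisely what guarantees the disjointness of the two marked events, so that their contributions add rather than multiply; this is why the constant $n-2$ appearing in \Cref{prop:q_l-des} is replaced by $n-3$ here. A convenient internal consistency check is that specializing $t = 1$ collapses $(q+t+n-3)^{n-1}$ to $(q+n-2)^{n-1}$, recovering \Cref{prop:q_l-des} as it should.
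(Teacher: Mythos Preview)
Your proof is correct and follows essentially the same approach as the paper: both exploit Kalikow's circular rotation construction to see that the consecutive differences $\mL_i$ are independent and uniform on $\mathbb{Z}/(n-1)\mathbb{Z}$, then factor the generating function over the $n-1$ positions using the trichotomy $\mL_i=\ell$, $\mL_i=m$, or neither. The paper packages this via the multinomial count $a_{j,k}=(n-1)^{n-1}\binom{n-1}{j,k}(n-1)^{-j-k}(n-3)^{n-1-j-k}$ and then sums, whereas you phrase it probabilistically via the bijection $\mL$ from \Cref{thm:Tie_Set_count}; these are the same computation.
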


\begin{proof}
The proof applies the circular rotation idea as in the proof of \Cref{prop:q_l-des}. We note that the number of prime parking functions of length $n$ with exactly $j$ $\ell$-forward differences and $k$ $m$-forward differences is $a_{j,k}=(n-1)^{n-1}\binom{n-1}{j, k}\left(\frac{1}{n-1}\right)^{j+k}\left(\frac{n-3}{n-1}\right)^{n-1-j-k}.$  Thus, 
\begin{align*}
\sum_{\pi \in \PPF_n} q^{
\fd(\pi)} t^{\fdm(\pi)} &=\sum_{j=0}^{n-1} \sum_{k=0}^{n-1-j} a_{j, k} q^j t^k=(n-1)^{n-1} \left(q\frac{1}{n-1}+t\frac{1}{n-1}+\frac{n-3}{n-1}\right)^{n-1} \\
&=(q+t+n-3)^{n-1}.\qedhere
\end{align*}
\end{proof}

\begin{theorem}\label{thm:Tie_Set_count_mixed}
    Let $m \neq \ell$. Given sets $S\subseteq[n-1]$ and $T\subseteq[n-1]$, and fixed $0\leq m, \ell\leq n-2,$ the number of prime parking functions $\pi\in\PPF_n$ with $\SetL{m}(\pi)=S$ and $\SetL{\ell}(\pi)=T$ is  
    $$|\{\pi \in \PPF_n : \SetL{m}(\pi)=S, \SetL{\ell}(\pi)=T\}| =(n-3)^{n-1-|S|-|T|}.$$
\end{theorem}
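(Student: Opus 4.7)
The plan is to mirror the proof of \Cref{thm:Tie_Set_count}, using Kalikow's circular rotation construction to obtain the bijection
$$\mathcal{L}: \PPF_n \longrightarrow (\mathbb{Z}/(n-1)\mathbb{Z})^{n-1}, \qquad \mathcal{L}(\pi)_i = \pi_{i+1}-\pi_i \pmod{n-1},$$
so that $\SetL{m}(\pi) = \{i : \mathcal{L}(\pi)_i = m\}$ and $\SetL{\ell}(\pi) = \{i : \mathcal{L}(\pi)_i = \ell\}$. Via this bijection, counting prime parking functions with the prescribed $m$- and $\ell$-forward difference sets reduces to counting vectors $v \in (\mathbb{Z}/(n-1)\mathbb{Z})^{n-1}$ such that $v_i = m$ for all $i \in S$, $v_i = \ell$ for all $i \in T$, and $v_i \notin \{m, \ell\}$ for all $i \in [n-1] \setminus (S \cup T)$.

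First I would observe that if $S \cap T \neq \emptyset$, then some index $i$ would require $\mathcal{L}(\pi)_i = m$ and $\mathcal{L}(\pi)_i = \ell$ simultaneously, contradicting $m \neq \ell$; so the set being counted is empty. We may therefore assume $S \cap T = \emptyset$, in which case $|S|+|T| \leq n-1$ and the exponent in the claimed formula is nonnegative. For each of the $|S|$ indices in $S$ and each of the $|T|$ indices in $T$, the coordinate $v_i$ is forced (one valid choice). For each of the remaining $n-1-|S|-|T|$ indices, $v_i$ can be any of the $n-1$ elements of $\mathbb{Z}/(n-1)\mathbb{Z}$ except $m$ and $\ell$, yielding $n-3$ choices per coordinate. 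Multiplying gives a total count of $(n-3)^{n-1-|S|-|T|}$, as claimed.

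There is essentially no main obstacle here, since the argument is a direct coordinate-by-coordinate adaptation of the single-set case in \Cref{thm:Tie_Set_count}. The only subtlety worth flagging is the implicit hypothesis $S \cap T = \emptyset$ that makes the stated formula correct; one can either build this into the theorem statement or interpret the disjoint case as a consistency check against the marginal count $\sum_T (n-3)^{n-1-|S|-|T|} = (n-2)^{n-1-|S|}$ recovered from \Cref{thm:Tie_Set_count} by summing over $T \subseteq [n-1] \setminus S$.
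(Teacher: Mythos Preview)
Your proposal is correct and follows essentially the same approach as the paper: both invoke the bijection $\mathcal{L}:\PPF_n\to(\mathbb{Z}/(n-1)\mathbb{Z})^{n-1}$ from the proof of \Cref{thm:Tie_Set_count} and reduce to counting vectors with prescribed coordinates on $S$, on $T$, and avoiding $\{m,\ell\}$ elsewhere. Your explicit treatment of the $S\cap T\neq\emptyset$ case is a welcome clarification that the paper leaves implicit.
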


\begin{proof}
The proof follows a similar line of reasoning as in the proof of \Cref{{thm:Tie_Set_count}}. We note that the set of $\pi\in\PPF_n$ with $\SetL{m}(\pi)=S$ and $\SetL{\ell}(\pi)=T$ is precisely the set of prime parking functions that satisfy $\mL(\pi)_i=m$ for all $i\in S$, $\mL(\pi)_j=\ell$ for all $j\in T$, and $\mL(\pi)_k\not=m,\ell$ for all $k\not\in S \cup T.$
\end{proof}

\PPFQuasiSymmetric*

\begin{proof}
We proceed as in the proof of \Cref{thm:quasisym}. Observe that
\begin{equation}
\label{eq:schur_mixed1}
\sum_{i=1}^n q^{n-i}(n-2)^{i-1} s_{(i,1^{n-i})}= \sum_{i=1}^n q^{n-i}(n-2)^{i-1} \sum_{S\subseteq [n-1], |S|=n-i}F_{n,S}.
\end{equation}
By change of variables $j=n-i$, we can rewrite \eqref{eq:schur_mixed1} as follows:
\begin{align*}
    \sum_{j=0}^{n-1} q^j (n-2)^{n-j-1} \sum_{S\subseteq [n-1], |S|=j} F_{n,S}
    =&\sum_{j=0}^{n-1}  \sum_{S\subseteq [n-1], |S|=j}  q^{|S|} F_{n,S} ~|\pi \in \PPF_n : \SetL{\ell}(\pi)=S|\\
    =&\sum_{\pi \in \PPF_n} q^{\fd(\pi)} F_{n,\SetL{\ell}(\pi)},
\end{align*}
where the first equality follows by \Cref{thm:Tie_Set_count}.

Similarly,
\begin{equation}
\label{eq:schur_mixed2}
\sum_{i=1}^n (q+n-3)^{i-1}s_{i,1^{n-i}}= \sum_{i=1}^n (q+n-3)^{i-1} \sum_{S\subseteq [n-1], |S|=n-i}F_{n,S}.
\end{equation}
By change of variables $j=n-i$, we can rewrite \eqref{eq:schur_mixed2} as follows:
\begin{align*}
    &\sum_{j=0}^{n-1} (q+n-3)^{n-j-1} \sum_{S\subseteq [n-1], |S|=j} F_{n,S}\\
    =&\sum_{j=0}^{n-1} \sum_{k=0}^{n-j-1} \binom{n-j-1}{k} q^k (n-3)^{n-j-k-1} \sum_{S\subseteq [n-1], |S|=j}  F_{n,S}\\
    =&\sum_{j=0}^{n-1} \sum_{k=0}^{n-j-1}   \sum_{\substack{S\subseteq [n-1] \\ |S|=j}} \sum_{\substack{T \subseteq [n-1]\setminus S \\ |T|=k}} q^{|T|} F_{n,S} ~|\pi \in \PPF_n : \SetL{m}(\pi)=S, \SetL{\ell}(\pi)=T| \\
    =&\sum_{\pi \in \PPF_n} q^{
\fd(\pi)} F_{n, \SetL{m}(\pi)},
\end{align*}
where the first equality uses the binomial expansion and the second equality follows by \Cref{thm:Tie_Set_count_mixed}.
\end{proof}

\section{Conclusions and future work}\label{sec: conclusions}
Since the inception of the parking problem, researchers have found deep connections between parking
functions and many other combinatorial structures, leading to applications in probability and statistics, algebraic geometry, interpolation theory, and representation theory. As pointed to in the introduction, the study of parking functions has found wide applications in data science through the alternative formulation of the parking problem as a hashing problem. Via this angle, the average displacement that we derived in prime parking functions provides insight into the efficiency of storing and retrieving data in the system. We study parking functions from multiple lenses, concentrating in particular on a subclass of parking functions termed prime parking functions. We expect that similar techniques may be extended to other subclasses of parking functions such as Stirling parking functions and tiered parking functions. In the last section, we display the intriguing link between parking functions and (quasi)symmetric functions. We hope to explore the interconnection between parking functions
and other related topics in combinatorics in future work.

\section*{Acknowledgements}
The authors benefited from participation in the Collaborative Workshop in Algebraic Combinatorics at the Institute for Advanced Study in June 2025.
P.~E.~Harris and M.~Yin were supported in part by an award from the Simons Travel Support for Mathematicians program. S.~Kara was supported by NSF Grant DMS-2418805.

\bibliographystyle{plain}
\bibliography{bibliography}

\end{document}